\crefname{hypothesis}{Hypothesis}{Hypotheses}
\title{Randomized matrix-free quadrature: unified and uniform bounds for stochastic Lanczos quadrature and the kernel polynomial method \thanks{
\funding{This material is based on work supported by the National Science Foundation under Grant Nos. DGE-1762114 (TC), DMS-1945652 (TT).
Any opinions, findings, and conclusions or recommendations expressed in this material are those of the authors and do not necessarily reflect the views of the National Science Foundation.}}}
\author{Tyler Chen\thanks{New York University 
  (\email{tyler.chen@nyu.edu}).}
\and Thomas Trogdon\thanks{University of Washington
  (\email{trogdon@uw.edu}).}
\and Shashanka Ubaru\thanks{IBM Watson (\email{Shashanka.ubaru@ibm.com}).}}
\DeclareSymbolFont{bbold}{U}{bbold}{m}{n}
\DeclareSymbolFontAlphabet{\mathbbold}{bbold}
\newenvironment{labelalgorithm}[4][t]{%
\begin{algorithm}[#1]
\newcommand{\thealgorithmname}{#3}
\customlabel{alg:name:#2}{\textproc{#3}}
\caption{#4}\label{alg:#2}
}{\end{algorithm}}
\numberwithin{algorithm}{section}
\newcommand{\customlabel}[2]{%
   \protected@write \@auxout {}{\string \newlabel {#1}{{#2}{\thepage}{#2}{#1}{}} }%
   \hypertarget{#1}{}%
}
\definecolor{Base02}{HTML}{1e3668}
\definecolor{Red}{HTML}{dc322f}
\definecolor{Magenta}{HTML}{d33682}
\definecolor{Violet}{HTML}{6c71c4}
\definecolor{Cyan}{HTML}{2aa198}
\definecolor{Green}{HTML}{859900}
\newcommand{\bOne}{\ensuremath{\mathbbold{1}}}
\newcommand{\EE}{\ensuremath{\mathbb{E}}}
\newcommand{\PP}{\ensuremath{\mathbb{P}}}
\newcommand{\lmin}{\ensuremath{\lambda_{\textup{min}}}}
\newcommand{\lmax}{\ensuremath{\lambda_{\textup{max}}}}
\newcommand{\samp}[2][]{#1\langle #2 #1\rangle}
\newcommand{\qq}[3][]{%
    \ifstrempty{#1}{%
        [#2]_{#3}^{\circ\textup{q}}
    }{%
        [#2]_{#3}^{\textup{{#1}q}}
    }%
}
\newcommand{\ff}[3][]{%
    \ifstrempty{#1}{%
        [#2]_{#3}^{\circ\textup{p}}
    }{%
        [#2]_{#3}^{\textup{{#1}p}}
    }%
}
\newcommand{\R}{\mathbb{R}}
\renewcommand{\d}{\ensuremath{\,\mathrm{d}}}
\newcommand{\im}{\bm{i}}
\newcommand{\W}{d_{\mathrm{W}}}
\renewcommand{\vec}{\mathbf}
\newcommand{\F}{\textup{\textsf{F}}}
\newcommand{\cT}{*}
\newcommand{\tr}{\operatorname{tr}}
\newcommand{\nv}{m}
\newcommand{\fA}{f(\vec{A})}
\newcommand{\stkout}[1]{\ifmmode\text{\sout{\ensuremath{#1}}}\else\sout{#1}\fi}
\newcommand*{\addFileDependency}[1]{
\typeout{(#1)}
%
%
\@addtofilelist{#1}
%
\IfFileExists{#1}{}{\typeout{No file #1.}}
}\makeatother
\newcommand*{\myexternaldocument}[1]{%
\externaldocument{#1}%
\addFileDependency{#1.tex}%
\addFileDependency{#1.aux}%
}
\date{}
\begin{document}

\maketitle

\begin{abstract}
We analyze randomized matrix-free quadrature algorithms for spectrum and spectral sum approximation. The algorithms studied include the kernel polynomial method and stochastic Lanczos quadrature, two widely used methods for these tasks.  Our analysis of spectrum approximation unifies and simplifies several one-off analyses for these algorithms which have appeared over the past decade. In addition, we derive bounds for spectral sum approximation which guarantee that, with high probability, the algorithms are simultaneously accurate on all bounded analytic functions. Finally, we provide comprehensive and complimentary numerical examples. These examples illustrate some of the qualitative similarities and differences between the algorithms, as well as relative drawbacks and benefits to their use on different types of problems.
\end{abstract}

\section{Introduction}

In this paper, we analyze a general class of randomized quadrature algorithms for approximating the cumulative empirical spectral measure (CESM)\footnote{In physics, the ``density'' \( \d\Phi/\d{x} \) is often called the density of states (DOS).} 
\begin{equation}
    \Phi(x) 
    = \sum_{i=1}^{n} \frac{1}{n} \bOne(\lambda_i\leq x)
\end{equation}
corresponding to a Hermitian matrix $\vec{A}$ with eigendecomposition \( \vec{A} = \sum_{i=1}^{n} \lambda_i \vec{u}_i \vec{u}_i^\cT \).
The CESM is related to the spectral sum $\tr(\fA)$ in that
\begin{equation}
    \tr(\fA) = \sum_{i=1}^{n} f(\lambda_i) = n \int f(x) \d\Phi(x),
\end{equation}
where $\fA = \sum_{i=1}^{n} f(\lambda_i) \vec{u}_i \vec{u}_i^\cT$ is a matrix function.
The algorithms we study include the well-known Kernel Polynomial Method (KPM) \cite{skilling_89,weisse_wellein_alvermann_fehske_06} and Stochastic Lanczos Quadrature (SLQ) \cite{bai_fahey_golub_96,ubaru_chen_saad_17} and can be broken into two main stages, illustrated in \cref{fig:intro_spec_approx}.

\begin{figure}
    \includegraphics[width=\textwidth]{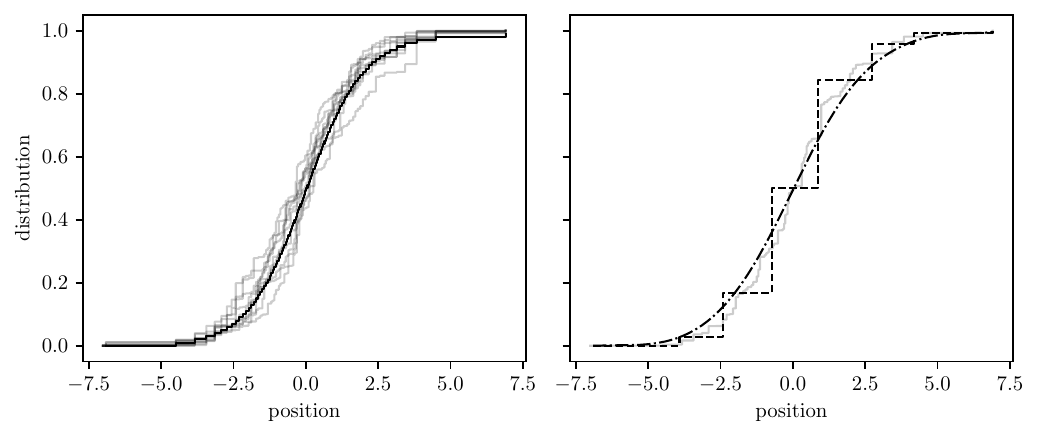}
    \caption{
        Illustration of the two main components of \cref{alg:protoalg}.
        \emph{Left}: 
        CESM \( \Phi \) ({\protect\raisebox{0mm}{\protect\includegraphics[scale=.7]{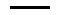}}}) and 
        10 independent samples of weighted CESM \( \Psi \) ({\protect\raisebox{0mm}{\protect\includegraphics[scale=.7]{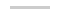}}}).
        Each copy is a sample from an unbiased estimator for \( \Phi \) at all points \( x \).
    \emph{Right}:
        One sample of the weighted CESM \( \Psi \) ({\protect\raisebox{0mm}{\protect\includegraphics[scale=.7]{imgs/legend/solid_thin_alpha.pdf}}}) and different approximations \( \qq{\Psi}{s} \) to \( \Psi \) based on stochastic Lanczos quadrature
        ({\protect\raisebox{0mm}{\protect\includegraphics[scale=.7]{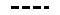}}})
        and the damped kernel polynomial method ({\protect\raisebox{0mm}{\protect\includegraphics[scale=.7]{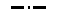}}}).
        Note that while the approximations are both induced by \( \ff{\:\cdot\:}{s} \) for different choices of ``\( \circ \)'' they are qualitatively different: one produces a piecewise constant approximation while the other produces a continuous approximation.
        In different situations, one type of approximation may be preferable to the other.}
    \label{fig:intro_spec_approx}
\end{figure}

In the first stage, the CESM $\Phi$ is approximated with the weighted CESM
\begin{equation}
    \Psi(x) = \vec{v}^\cT \bOne(\vec{A}\leq x) \vec{v}
    = \sum_{i=1}^{n} |\vec{v}^\cT \vec{u}_i|^2 \bOne(\lambda_i\leq x),
\end{equation}
corresponding to $\vec{A}$ and random unit vector $\vec{v}$ satisfying $\EE[\vec{v}\vec{v}^\cT] = n^{-1} \vec{I}$.
Since $\EE[|\vec{v}^\cT \vec{u}_i|^2] = \EE[\tr(\vec{v}\vec{v}^\cT \vec{u}_i \vec{u}_i^\cT)] = n^{-1} \tr(\vec{u}_i^\cT \vec{u}_i) = n^{-1}$, the weighted CESM $\Psi(x)$ is an unbiased estimator for the CESM $\Phi(x)$.
To reduce the variance of the estimator, we use the averaged weighted CESM 
\begin{equation}
    \samp{ \Psi_\ell } = \frac{1}{\nv} \sum_{\ell=1}^{\nv} \Psi_\ell ,
\end{equation}
where \( \{ \Psi_\ell \}_{\ell=1}^{\nv} \) be independent and identically distributed (iid) copies of the weighted CESM \( \Psi \) corresponding to vectors \( \{ \vec{v}_\ell \}_{\ell=1}^{\nv} \) which are iid copies of \( \vec{v} \).
Clearly $\samp{ \Psi_\ell }$ is also an unbiased estimator for the CESM at every point \( x \in \R \).
Note that we are abusing notation slightly, as $\samp{\Psi_\ell}$ does not depend on the index $\ell$; i.e. $\ell$ is a dummy variable.\footnote{This is not unlike how we often write $f(x)$ to mean the function $f:\R\to\R$.}

In the second stage, each weighted CESM $\Psi_\ell$ is approximated using a polynomial quadrature rule. 
In particular, given a polynomial operator \( \ff{\:\cdot\:}{s} \) such that \( \ff{f}{s} \) is a \emph{polynomial} approximation to $f$ of degree at most \( s \), a natural approach to obtaining a \emph{quadrature} approximation \( \qq{\Psi}{s} \) to \( \Psi \) is through the definition\footnote{Provided $\ff[]{f}{s}$ is linear, as is the case for the choices of $\circ$ we consider, the Riesz–Markov–Kakutani representation theorem allows us to define a distribution function in this way.}
\begin{equation}
    \int_{-\infty}^{\infty} f(x) \d \qq{\Psi}{s}(x)  = \int_{-\infty}^{\infty} \ff{f}{s}(x) \d\Psi(x).
\end{equation}
The exact choice of \( \ff{\:\cdot\:}{s} \) is indicated by the parameter ``\( \:\circ\: \)'' which encompasses many sub-parameters of specific algorithms.
In particular, as we expand on in \cref{sec:quadrature}, both SLQ and KPM can be obtained by particular choices of ``\( \:\circ\: \)''.
Since integrals of polynomials against \( \Psi \) can be evaluated exactly using just the moments of \( \Psi \), such an integral is easily evaluated from the moments of \( \Psi \).

The moments of $\Psi$ through degree $2k$ can be obtained from the Krylov subspace \cite{greenbaum_97,liesen_strakos_13} 
\begin{equation}
    \mathcal{K}_{k}(\vec{A},\vec{v})
    = \operatorname{span}\{\vec{v}, \vec{A}\vec{v}, \ldots, \vec{A}^{k}\vec{v} \}
    = \{ p(\vec{A})\vec{v} : \deg(p) \leq k \}.
\end{equation}
Indeed, for any \( i,j \) with \( 0\leq i,j \leq k \),
\begin{equation}
    (\vec{A}^{i}\vec{v})^\cT (\vec{A}^j\vec{v}) = \vec{v}^\cT \vec{A}^{i+j} \vec{v} = \int_{-\infty}^{\infty} x^{i+j} \d\Psi(x).
\end{equation}
There are many possibilities for how the moment information can be extracted, and different choice of \( \ff{\:\cdot\:}{s} \) (and therefore of \( \qq{\:\cdot\:}{s} \)) often lead to differing approaches. 
However, these approaches are all mathematically equivalent in that  the polynomial moments with respect to one polynomial basis can be converted to moments with respect to a different basis. 
One principled way of doing this is by using using so-called \emph{connection coefficients} \cite{webb_olver_21}.

The resulting approximation to the CSEM $\Phi$ is $\samp{\qq[]{\Psi_\ell}{s}}=\nv^{-1} \sum_{\ell=1}^{\nv} \qq[]{\Psi_\ell}{s}$. 
This naturally yields an approximation $n\int f(x) \d\samp{\qq[]{\Psi_\ell}{s}}(x)$ to the spectral sum $\tr(\fA)$.
The prototypical algorithm considered in this paper can be summarized as follows:
\begin{labelalgorithm}[H]{protoalg}{protoalg}{Prototypical randomized matrix free quadrature}
\begin{algorithmic}[1]
    \Procedure{\thealgorithmname}{$\vec{A}, \nv, k, \circ$}
    \For{ \( \ell = 1, 2, \ldots, \nv \) }
    \State define (implicitly) \( \Psi_\ell \stackrel{\text{iid}}{\sim} \Psi \) by sampling \( \vec{v}_\ell \stackrel{\text{iid}}{\sim} \vec{v} \), \( \EE[\vec{v}\vec{v}^\cT] = n^{-1} \vec{I} \) 
    \State compute the moments of \( \Psi_{\ell} \) through degree \( s \) by constructing \( \mathcal{K}_k(\vec{A},\vec{v}_\ell) \) 
    \State approximate \( \Psi_{\ell} \) by \( \qq{\Psi_{\ell}}{s} \) induced by a polynomial operator \( \ff{\:\cdot\:}{s} \) 
    \EndFor
    \State \Return \( \samp{ \qq{\Psi_{\ell}}{s} } = \frac{1}{\nv} \sum_{\ell=1}^{\nv} \qq{\Psi_\ell}{s} \)

\EndProcedure
\end{algorithmic}
\end{labelalgorithm}

The main purpose of this paper is to study the theoretical and practical behavior difference choices of ``$\:\circ\:$'' for both the tasks of spectral sum and spectrum approximation.
In particular, our theoretical bounds and numerical experiments highlight the qualitative features of KPM and SLQ.

\subsection{Notation}
\label{sec:preliminaries}

Throughout, matrices are denoted by bold uppercase letters and vectors are denoted by bold lowercase letters.
The conjugate transpose of a matrix \( \vec{B} \) is denoted by \( \vec{B}^\cT \).
For \( S\subseteq \R \) and a function \( f: S \to\R \), we define \( \| f \|_{S} = \sup_{x\in S} |f(x)| \).
Integrals are written with the variable of integration suppressed.
Finally, the average of quantities $\{x_\ell\}_{\ell=1}^{\nv}$ is written $\langle x_\ell \rangle = \nv^{-1} \sum_{\ell=1}^{\nv} x_\ell$.

\section{Background}
\label{sec:background}

\subsection{Past theoretical analysis}
\label{sec:related_work}

Using the triangle inequality, we can decompose the error of \cref{alg:protoalg} for spectral sum approximation as
\begin{equation}
    \left|n^{-1} \tr(\fA) - \int f \d \samp{\qq{\Psi_\ell}{s}} \right| 
    \leq 
    \bigg| \int f \d\big(\Phi - \samp{\Psi_\ell}\big) \bigg|
    + \bigg| \int f \d\big(\samp{\Psi_\ell} - \samp{\qq{\Psi_\ell}{s}}\big) \bigg|.
\label{eqn:proto_triangle_f}
\end{equation}
The first term can be bounded by trace estimation bounds and is made small by making the number of samples $m$ of the weighted CESM large. 
The second term can be bounded by analyzing the convergence of quadrature rules, and is made small by making the degree of the approximation $s$ large.
This approach has been applied to various choices of ``$\circ$'' with the aim of balancing $m$ and $s$ so that the error of \cref{eqn:proto_triangle_f} is of size $n^{-1}\varepsilon$ with probability at least $1-\delta$ \cite{han_malioutov_avron_shin_17,ubaru_chen_saad_17,cortinovis_kressner_21}.
Such bounds hold for a \emph{fixed} function $f$.

More recently, spectrum approximation in Wasserstein distance was concurrently analyzed for KPM \cite{braverman_krishnan_musco_22} and SLQ \cite{chen_trogdon_ubaru_21}.
In both cases, it is shown that (up to log factors), to approximate \( \Phi \) in Wasserstein distance to accuracy \( \varepsilon \), one must set $s = O(\varepsilon^{-1})$. 
Amazingly, the number of samples $\nv$ required is constant (e.g. 1) provided (up to log factors) \( \varepsilon \gg n^{-1/2} \) as \( n\to\infty \).
This fact was already known informally to the physics community \cite{girard_87,weisse_wellein_alvermann_fehske_06}.

It is well-known that two distributions are close in Wasserstein distance if and only if the corresponding integrals are close for \emph{all} Lipschitz functions. 
Thus, the bounds for spectrum approximation can be viewed as uniform bounds for Lipschitz functions.
In some applications, one wishes to approximate functions with more regularity than Lipschitz continuity.
For instance, in quantum thermodynamics computing quantities such as $\tr(\exp(-\beta \vec{A}))$ for a range of $\beta$ is of interest; see e.g. \cref{sec:spin}.
In these cases, we might hope for a more favorable dependence of $s$ on $\varepsilon$ than the $s = O(\varepsilon^{-1})$ given by \cite{chen_trogdon_ubaru_21,braverman_krishnan_musco_22}.
To the best of our knowledge, this setting has not been analyzed in the literature.

\subsection{Our contributions}
We provide theoretical bounds in \cref{sec:main_bounds}.
While our bounds for spectrum approximation do improve past bounds slightly, we believe their true value is in the unified analysis which makes the connections between these well-known algorithms clearer.
In addition, we show how to derive bounds that hold simultaneously for all functions in a specified class of functions.
We apply this approach to analytic bounded functions on a Bernstein ellipse and obtain explicit bounds.
To the best of our knowledge, these are the first bounds of this type for trace estimation algorithms and provide a theoretical justification for approximating parameterized functions using matrix-free quadrature algorithms.

In addition to theory, we provide numerical experiments that highlight a number of qualitative trade-offs between the algorithms.
In particular, we compare SLQ and KPM in a range of settings and study the impact of using KPM approximations corresponding to orthogonal polynomial families other than the Chebyshev polynomials. 
We have not seen a thorough study of these topics in the literature.
While it has been long recognized that KPM can be implemented with any orthogonal polynomial family \cite{silver_roder_94,weisse_wellein_alvermann_fehske_06}, we are unaware of any detailed explorations into non-classical families. 
Our numerical experiments demonstrate that moving beyond Chebyshev based methods may result in practical benefits.

\subsection{Other algorithms}
As a consequence of the central limit theorem, the average of iid samples of quadratic trace estimators requires \( O(\varepsilon^{-2}) \) samples to reach accuracy \( \varepsilon \) as $\varepsilon\to 0$. 
In fact, any algorithm which returns a linear combination of estimators depending on vectors drawn independently of \( \vec{A} \) requires \( O(\varepsilon^{-2}) \) samples to obtain an approximation of the trace accurate to within a multiplicative factor \( 1\pm \varepsilon \) \cite{wimmer_wu_zhang_14}.
A number of papers aim to avoid this dependence on the number of samples by computing the trace of a low-rank approximation to \( f(\vec{A}) \) \cite{weisse_wellein_alvermann_fehske_06,lin_16,gambhir_stathopoulos_orginos_17,saibaba_alexanderian_ipsen_17,morita_tohyama_20,meyer_musco_musco_woodruff_21,li_zhu_21,chen_hallman_22}.
Such a technique provides a natural variance reduction if quadratic trace estimators are applied to the remainder. 
This was analyzed in \cite{meyer_musco_musco_woodruff_21} which introduces an algorithm called Hutch++.
Hutch++ provably returns an  estimate of the trace of a positive definite matrix to relative error \( 1\pm \varepsilon \) using just \( O(\varepsilon^{-1}) \) matrix-vector products, and it is shown this \( \varepsilon \) dependence is nearly optimal in certain matrix-vector query models; see also \cite{persson_cortinovis_kressner_22,epperly_tropp_webber_24} for improvements and \cite{chen_hallman_22,persson_kressner_22} for efficient implementations for matrix functions.

\subsection{Quadrature approximations for weighted spectral measures}
\label{sec:quadrature}

Here we define several standard choices for \( \ff{\:\cdot\:}{s} \). In particular, we introduce quadrature by interpolation in \cref{def:iq}, Gaussian quadrature (SLQ) in \cref{def:gq}, and quadrature by approximation in \cref{def:aq}.
Finally, in \cref{def:damped}, we introduce a damped version of quadrature by approximation (KPM) which are used to ensure the positivity of the resulting approximations.
Throughout $\mu$ will be a fixed probability distribution function (e.g. the orthogonality distribution for the Chebyshev polynomials of the first kind).

We do not discuss the implementation of such quadrature rules, as the implementation does not impact the mathematical behavior of the algorithms (and therefore the corresponding bounds).
Details on potential implementations can be found in the literature \cite{skilling_89,bai_fahey_golub_96,weisse_wellein_alvermann_fehske_06,chen_23} or the supplementary materials.

\begin{definition}[Quadrature by interpolation; $\circ = \textrm{i}$]
\label{def:iq}
    The degree \( s \) polynomial interpolating a function \( f \) at the zeros of \( p_{s+1} \), the degree $s+1$ orthogonal polynomial with respect to \( \mu \), is denoted by \( \ff[i]{f}{s} \).
    The corresponding quadrature approximation for $\Psi$ is \( \qq[i]{\Psi}{s} \).
\end{definition}
It's not hard to see that if the zeros of $p_{s+1}$ are  \( \{\theta_{k}\}_{k=1}^{s+1} \), then
\begin{equation}
    \qq[i]{\Psi}{s}(x) = \sum_{k=1}^{s+1} \omega_{k}^{} \bOne(\theta_{k} \leq x),
\end{equation}
where the weights \( \{ \omega_{k}^{} \}_{k=1}^{s+1} \) are chosen such that the moments of \( \qq[i]{\Psi}{s} \) agree with those of \( \Psi \) through degree \( s \).

While interpolation-based quadrature rules supported on \( s \) nodes do not, in general, integrate polynomials of degree higher than \( s-1 \) exactly, if we allow $\mu$ to depend on $\Psi$ we can do better.
\begin{definition}[Gaussian quadrature; $\circ = \textrm{g}$]
\label{def:gq}
The degree \( 2s-1 \) Gaussian quadrature rule \( \qq[g]{\Psi}{2s-1} \) for \( \Psi \) defined as $\qq[i]{\Psi}{s-1}$ where $\mu = \Psi$.
\end{definition}
It is well-known that the $k$-point Gaussian quadrature rule integrate polynomials of degree \( 2s-1 \) exactly \cite[Chapter 6]{golub_meurant_09}.
In the context of matrix-free quadrature, the Gaussian quadrature rule can be obtained by the Lanczos algorithm.

Rather than defining a quadrature approximation using an interpolating polynomial, we might use an approximating polynomial.
\begin{definition}[Quadrature by approximation; $\circ = \textrm{a}$]
\label{def:aq}
The projection of \( f \) onto polynomials through degree \( s \) in the inner product induced by $\mu$ is denoted \( \ff[a]{f}{s} \).
The corresponding quadrature approximation for $\Psi$ is \( \qq[a]{\Psi}{s} \).
\end{definition}

By definition, \(  \ff[a]{f}{s} = \sum_{k=0}^{s} \left(\int f p_k\d\mu \right) \:p_k \).
Then, expanding the integral of $\ff[a]{f}{s}$ against \( \Psi \),
\begin{equation}
    \int \ff[a]{f}{s} \d\Psi
    = \int \sum_{k=0}^{s} \left(\int f p_k \d\mu\right) p_k \d\Psi
    = \int f  \left(\sum_{k=0}^{s} \left(\int p_k \d\Psi\right) p_k \right) \d\mu.
\end{equation}
This implies 
\begin{equation}
    \frac{\d\qq[a]{\Psi}{s}}{\d\mu} 
    = \sum_{k=0}^{s} \left( \int p_k \d\Psi \right) p_k 
    = \sum_{k=0}^{s} m_k p_k,
\end{equation}
where \( \!\d\qq[a]{\Psi}{s}/\!\d\mu \) is the Radon--Nikodym derivative of \( \qq[a]{\Psi}{s} \) with respect to \( \mu \).
Alternately, supposing%
\footnote{If \( \Psi = \Psi(\vec{A},\vec{v}) \) then \( \Psi \) is not absolutely continuous with respect to the Lebesgue measure (or any equivalent measure) so the Radon--Nikodym derivative does not exist. 
However, there are absolutely continuous distributions with the same modified moments as \( \Psi \) up to arbitrary degree, so conceptually one can use such a distribution instead.}
that the Radon--Nikodym derivative \( \!\d\Psi/\!\d\mu \) exists, we observe 
\begin{equation}
    \frac{\d\Psi}{\d\mu}
    = \sum_{k=0}^{\infty} \left( \int p_k \frac{\d\Psi}{\d\mu}\d\mu \right) \: p_k
    =
    \sum_{k=0}^{\infty} \left( \int p_k \d\Psi \right) \: p_k.
\end{equation}
In other words, at least formally, \( \!\d\qq[a]{\Psi}{s}/\!\d{x} \) is the polynomial approximation to \( \!\d\Psi/\!\d\mu \); i.e. \( \!\d\qq[a]{\Psi}{s}/\!\d{\mu} = \ff[a]{\!\d\Psi/\!\d\mu}{s} \).
Thus, the KPM can be viewed as arising from a polynomial approximation to the Radon--Nikodym derivative $\d\mu/\d x$.

It is common to use a damped version of \( \qq[a]{\Psi}{s} \) to obtain weakly increasing quadrature rules.
Damping involves replacing $p_k$ with $\rho_k p_k$ for $k=0,1,\ldots, s$ in \( \ff[]{\,\cdot\,}{s} \), where $\{\rho_k\}_{k=0}^{s}$ are the so-called damping coefficients.
\begin{definition}[damped quadrature by approximation; $\circ = \textrm{d}$]
\label{def:damped}
    Given damping coefficients \( \{ \rho_k \}_{k=0}^{s} \), the damped approximant \( \ff[d]{f}{s} \) is defined by
\begin{equation*}
    \ff[d]{f}{s}(x) = \int P_x \ff[a]{f}{s} \d\mu,
    \qquad P_x(y) 
    = \sum_{k=0}^{s} \rho_{k} p_k(x) p_k(y).
\end{equation*}
The corresponding quadrature approximation is \( \qq[d]{\Psi}{s} \).
\end{definition}

Using that $p_k(x)$ is orthogonal to $1$ with respect to $\mu$, $\int P_x \d\mu = \rho_{0}$.
Thus, \( \qq[d-i]{\Psi}{s} \) and \( \qq[d]{\Psi}{s} \) will have unit mass provided \( \rho_0 = 1 \).
Next, suppose \( P_x(y) \geq 0 \) for all \( x,y  \) and that \( f \geq 0 \).
Then using that $\int P_x \ff[a]{f}{s} \d\mu = \int P_x f \d\mu \geq 0$, \( \int f \d\qq[d]{\Psi}{s} \geq 0 \) so the approximation is also weakly increasing. 

One particularly important damping kernel for the case that $\mu$ is the orthogonality distribution for the Chebyshev-$T$ polynomials (see \cref{def:cheb}) is given by the Jackson coefficients
\begin{equation}
\label{eqn:jackson_coeffs}
    \rho_{k}^{J}
    = ({s+2})^{-1} \left( (s-k+2)\cos \left( \frac{k \pi}{s+2} \right) + \sin \left( \frac{k \pi}{s+2} \right)\cot \left( \frac{\pi}{s+2} \right) \right)
    ,\qquad
    k=0,1,\ldots, s.
\end{equation}
The associated damping kernel $P_x$ is non-negative and was used in the original proof of Jackson's theorem \cite{jackson_12}. 
It leads to the Jackson damped KPM approximation, which is the most popular KPM variant \cite{silver_roeder_voter_kress_96,weisse_wellein_alvermann_fehske_06,braverman_krishnan_musco_22}.
The rate of convergence of polynomial approximations using these damping coefficients is estimated in \cref{thm:damped_cheb} below.
For a discussion on other damping schemes, we refer readers to \cite{weisse_wellein_alvermann_fehske_06}.

\section{Main bounds}
\label{sec:main_bounds}

In this section, we provide theoretical bounds for several instances of the protoalgorithm, \cref{alg:protoalg}.
We prioritize simplicity of our result statements. 
In particular, in several cases it is clear from the proof that constants can be improved marginally over what is stated.
We do not analyze the case $\circ = \mathrm{i}$, although it is clear from the analysis how a similar approach could be applied to obtain bounds. 

We will require the definition of Lipshitz functions:
\begin{definition}
    We say that \( f:\R^m\to\R^n \) is $L$-Lipschitz on $S\subset\mathbb{R}^m$ if \( \|f(\vec{x}) - f(\vec{y})\| \leq L\|\vec{x}-\vec{y}\| \) for all \( \vec{x},\vec{y} \in S \).
\end{definition}

For (damped) quadrature by approximation, our theoretical results focus on the case that $\mu$ is a shifted and scaled version of the orthogonality measure for the Chebyshev polynomials of the first kind:
\begin{definition}\label{def:cheb}
    The Chebyshev distribution function of the first kind, \( \mu_{a,b}^T : [a,b] \to [0,1] \), is
    \begin{equation*}
        \mu_{a,b}^T(x) = \int_{a}^{x}\frac{2}{\pi(b-a)} \frac{1}{\sqrt{1-\big(\frac{2}{b-a}t - \frac{b+a}{b-a}\big)^2}} \d{t}
        = \frac{1}{2} + \frac{1}{\pi}\arcsin\left(\frac{2}{b-a}x - \frac{b+a}{b-a}\right).
    \end{equation*}
\end{definition}
This is by far the most common choice of $\mu$ in the literature \cite{skilling_89,silver_roeder_voter_kress_96,weisse_wellein_alvermann_fehske_06,lin_saad_yang_16,braverman_krishnan_musco_22}, although, as we demonstrate with numerical experiments in \cref{sec:numerical_experiments}, other choices of $\mu$ can have good qualitative properties.

We also define the Chebyshev polynomials of the first kind:
\begin{definition}
    The Chebyshev polynomials of the first kind, denoted \( \{ T_i \}_{i=0}^{\infty} \), are defined by the recurrence \( T_0(x) = 1 \), \( T_1(x) = x \), and
    \begin{equation}
        T_{i+1}(x) := 2 x T_i(x) - T_{i-1}(x)
        ,\qquad i=1,2,\ldots.
    \end{equation}
\end{definition}

For simplicity, we will assume that the random vectors are drawn from the uniform distribution on the unit hypersphere. \begin{definition}
Let $\operatorname{Unif}(\mathbb{S}^{n-1})$ be the uniform distribution on unit-length vectors in $\mathbb{R}^n$.
\end{definition}

In this case, the trace estimator exhibits sub-Gaussian concentration. 
We follow the general approach of \cite{popescu_short_winter_06}.
\begin{lemma}\label{thm:subgaussian}
There exists an absolute constant $c>0$ such that for any $n\times n$ matrix $\vec{B}$ and $\varepsilon>0$, if $\{\vec{v}_\ell\}_{\ell=1}^{\nv}$ are iid samples from $\operatorname{Unif}(\mathbb{S}^{n-1})$, then
\begin{equation*}
        \PP \Big[
        \big| n^{-1} \tr(\vec{B}) - \samp{\vec{v}_\ell^\cT \vec{B} \vec{v}_\ell} \big|
        > \varepsilon
        \Big]
        \leq 2\exp \left( - \frac{c n \nv \varepsilon^2 }{\|\vec{B}\|_2^2}\right).
    \end{equation*}
\end{lemma}

\begin{proof}
    Define $F(\vec{x}) = \vec{x}^\cT\vec{B}\vec{x}$ and observe that 
    \begin{equation}
        \big| F(\vec{x}) - F(\vec{y}) \big|
        = \big|\vec{x}^\cT\vec{B}\vec{x} - \vec{y}^\cT \vec{B}\vec{y}\big|
        = \frac{1}{2}\big| (\vec{x}+\vec{y})^\cT \vec{B} (\vec{x}-\vec{y}) + (\vec{x}-\vec{y})^\cT \vec{B} (\vec{x}+\vec{y}) \big|.
    \end{equation}
    We also have that 
    \begin{equation}
        \big|(\vec{x}-\vec{y})^\cT \vec{B} (\vec{x}+\vec{y}) \big|
        \leq \| \vec{x} + \vec{y} \| \|\vec{B} \| \| \vec{x}-\vec{y}\|
        \leq (\| \vec{x} \| + \| \vec{y} \|) \|\vec{B} \|_2 \| \vec{x}-\vec{y}\|
        \leq 2 \|\vec{B}\|_2 \|\vec{x}-\vec{y}\|.
    \end{equation}
    Therefore $F(\vec{x})$ is $2 \|\vec{B}\|_2$-Lipshitz on $\mathbb{S}^{d-1}$.
    By the concentration of Lipshitz functions on the sphere (see for instance \cite[Theorem 5.1.4]{vershynin_18}), there is an absolute constant $c$ such that, if $\vec{v}\sim \operatorname{Unif}(\mathbb{S}^{n-1})$,
    \begin{equation*}
        \PP\bigl[ | F(\vec{v}) - \EE[F(\vec{v})] | > \varepsilon \bigr]
        \leq 2 \exp \left( \frac{-c n\varepsilon^2}{2\|\vec{B}\|_2^2} \right).
    \end{equation*}
    The final result holds by a standard bound for the average of iid copies of a sub-Gaussian random variable (see for instance \cite[\S2.5 and Theorem 2.6.2]{vershynin_18}), observing that $\EE[F(\vec{v})] = n^{-1}\tr(\vec{B})$, and relabeling $c$.    
\end{proof}

For Rademacher (uniform $\pm 1$) and Gaussian vectors, sharper bounds for small $\varepsilon$ based on Hanson--Wright type  inequalities are also known \cite{meyer_musco_musco_woodruff_21,cortinovis_kressner_21}.
These bounds typically improve the dependence on $n\|\vec{B}\|_2^2$ to $\|\vec{B}\|_\F^2$.
The precise bounds/distribution used are not that relevant to the big picture of this paper, and it is clear from our analysis how other bounds can be substituted.

As an immediate corollary of \cref{thm:subgaussian} we have the following (trivial) result on the Chebyshev moments of $\Phi$ and $\samp{\Psi_{\ell}}$.
\begin{lemma}
\label{thm:cheb_union}
Fix $\varepsilon>0$ and suppose $\{\vec{v}_\ell\}_{\ell=1}^{\nv}$ are iid samples from $\operatorname{Unif}(\mathbb{S}^{n-1})$. Let $c$ as in \cref{thm:subgaussian}. 
Then, for any $s>0$,
\begin{equation*}
    \PP\left[\exists k\in\{0,1,\ldots, s\}:  \left| \int T_k \d\left(\Phi - \samp{\Psi_\ell} \right) \right| > \varepsilon \right]
    \leq 2 s \exp \left( - c \nv n\varepsilon^2 \right).
\end{equation*}
\end{lemma}

\begin{proof}
Note that $\int T_0 \d(\Psi-\samp{\Psi_\ell}) = 0$, since $\Psi$ and $\samp{\Psi_\ell}$ are both probability distributions.
Fix $k\in\{1,2,\ldots, s\}$.
Using that $\tr(\fA) = n \int f\d\Phi$ and that $\| T_k(\vec{A}) \|_2\leq \| T_k \|_{[-1,1]} = 1$, \cref{thm:subgaussian} implies that 
\begin{equation}
    \PP\left[ \left| \int T_k \d\left(\Phi - \samp{\Psi_\ell} \right) \right| > \varepsilon \right]
    \leq 2 \exp \left( - cn\nv\varepsilon^2 \right).
\end{equation}
Using a union bound over the events that the $k$-th moment is large for each $k=1,2,\ldots, s$ and absorbing the $n$ into $\varepsilon$ gives the result.
\end{proof}

Throughout we make use of the following key lemma which is inspired by the approach taken in \cite{braverman_krishnan_musco_22}:
\begin{lemma}\label{thm:general_unif}
Suppose $\Upsilon_1$ and $\Upsilon_2$ are two probability distribution functions supported on $[-1,1]$.
Then, for any $s>0$, and for all bounded $f$,
\begin{equation*}
\left| \int f \d \left( \Upsilon_1 - \Upsilon_2 \right) \right|
\leq 
2 \| f - \ff[]{f}{s} \|_{[-1,1]}  + 2\sum_{k=1}^{s} \left| \int \ff{f}{s} T_k\d\mu_{-1,1}^T \right| \left| \int T_k \d \left( \Upsilon_1 - \Upsilon_2 \right)  \right|.
\end{equation*}
\end{lemma}

\begin{proof}
Fix a bounded function $f$. 
We begin with the main case.
By the triangle inequality,
\begin{equation}
    \left| \int f \d \left( \Upsilon_1 - \Upsilon_2 \right) \right|
    \leq \left| \int (f- \ff[]{f}{s}) \d \left( \Upsilon_1 - \Upsilon_2 \right)  \right|
    +\left| \int \ff[]{f}{s} \d \left( \Upsilon_1 - \Upsilon_2 \right)  \right|.
\end{equation}
Since $\Upsilon_1$ and $\Upsilon_2$ are probability distribution functions,
\begin{equation}
\label{eqn:fdiffbd}
    \left| \int (f- \ff[]{f}{s}) \d \left( \Upsilon_1 - \Upsilon_2 \right)  \right|
    \leq 
    \| f- \ff[]{f}{s} \|_{[-1,1]}  \int  |\!\d \left( \Upsilon_1 - \Upsilon_2 \right) |
    \leq 2\| f- \ff[]{f}{s} \|_{[-1,1]}. 
\end{equation}
Now, note that 
\begin{equation}
\ff[]{f}{s}
= \mathop{\smash{\sideset{}{'}\sum}\vphantom{\sum}}\limits_{k=0}^{s} \left(2 \int \ff[]{f}{s} T_k \d\mu_{-1,1}^T\right)  T_k
,
\end{equation}
where the prime indicates that the $k=0$ term in the sum is multiplied by $1/2$.
Using that $\Upsilon_1$ and $\Upsilon_2$ are both probability distributions and then applying the triangle inequality we find
\begin{equation}
    \left| \int \ff[]{f}{s} \d \left( \Upsilon_1 - \Upsilon_2 \right)  \right|
    \leq \sum_{k=1}^{s} \left| 2 \int \ff[]{f}{s} T_k \d\mu_{-1,1}^T \right|  \left| \int T_k \d \left( \Upsilon_1 - \Upsilon_2 \right)  \right|.
\end{equation}
\end{proof}

If uniform bounds for $\|f - \ff{f}{s}\|_{[-1,1]}$ and the Chebyshev coefficients of $\ff{f}{s}$ are known, then \cref{thm:general_unif} allows us to bound the closeness of two quadrature approximations in terms of the difference of their Chebyshev moments. 
This is the approach we will use in the next sections.

\subsection{Spectrum approximation}
\label{sec:spectral_density}

We begin with the task of spectrum approximation. 
Our main result is stated in \cref{thm:spec_approx}, which we prove following closely the approach of \cite{braverman_krishnan_musco_22}.
Compared to the analysis of KPM in \cite{braverman_krishnan_musco_22}, we obtain slightly better constants.
Compared to the analysis of SLQ in \cite{chen_trogdon_ubaru_21}, we replace a dependence of the failure probability on $n$ with a dependence on $s$, but obtain slightly worse constants for $s$ in terms of the accuracy $\varepsilon$.
The main advantage over each is a unified analysis.

Our analysis of spectrum approximation targets Wasserstein (earthmover) distance:
\begin{definition}
\label[definition]{def:wasserstein}
Let \( \Upsilon_1 \) and \( \Upsilon_2 \) be two probability distribution functions.
The Wasserstein distance between \( \Upsilon_1 \) and \( \Upsilon_2 \), denoted \( \W(\Upsilon_1,\Upsilon_2) \), is defined by
\begin{equation*}
    \W(\Upsilon_1,\Upsilon_2) = \int | \Upsilon_1(x) - \Upsilon_2(x) | \d{x}.
\end{equation*}
\end{definition}

The Wasserstein distance allows comparison of discrete and continuous probability distributions and has been studied in a number of past works on spectrum approximation
\cite{kong_valiant_17,cohensteiner_kong_sohler_valiant_18,chen_trogdon_ubaru_21,braverman_krishnan_musco_22,bhattacharjee_jayaram_musco_musco_ray_24}, 
However, in some situations, other metrics may be more meaningful. For instance, if it is important for two distribution functions to agree to very high precision in a certain region, but only to moderate accuracy in others, then the Wasserstein distance may be unsuitable.

It is well-known that the Wasserstein distance between two distribution functions is closely related to the difference of their integrals on Lipschitz functions; see for instance \cite[Remark 6.5]{villani_09}.
\begin{lemma}
\label[lemma]{thm:wasserstein_1lip}
    Suppose \( \Upsilon_1 \) and \( \Upsilon_2 \) are two probability distribution functions each constant on \( (-\infty,a) \) and \( (b,\infty) \).
Then,
\begin{equation*}
    \W(\Upsilon_1,\Upsilon_2) = \sup \left\{ \int f \d \big( \Upsilon_1 - \Upsilon_2\big) : \text{$f$ is 1-Lipschitz on $[a,b]$} \right\}.
\end{equation*}
\end{lemma}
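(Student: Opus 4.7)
The plan is to establish both inequalities separately, with each direction resting on a single Stieltjes integration-by-parts identity. The key observation is that because both $\Upsilon_1$ and $\Upsilon_2$ are probability distribution functions, their difference $G := \Upsilon_1 - \Upsilon_2$ vanishes at $\pm\infty$, and because both are constant outside $[a,b]$, the support of $\d G$ is contained in $[a,b]$. Combining these, for any absolutely continuous function $f$ with (essentially bounded) derivative $f'$, integration by parts should yield the boundary-term-free identity
\begin{equation*}
    \int f \d(\Upsilon_1 - \Upsilon_2) \;=\; -\int_a^b f'(x)\,(\Upsilon_1(x) - \Upsilon_2(x))\d{x}.
\end{equation*}
I would justify this either by approximating $f$ by smooth functions and passing to the limit, or by invoking a standard Stieltjes integration by parts formula for BV functions paired with a Lipschitz function.

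For the $\geq$ direction, take any $f$ that is 1-Lipschitz on $[a,b]$. Since 1-Lipschitz functions are absolutely continuous with $|f'(x)| \leq 1$ almost everywhere, applying the displayed identity and the triangle inequality immediately gives
\begin{equation*}
    \left|\int f \d(\Upsilon_1 - \Upsilon_2)\right| \;\leq\; \int_a^b |f'(x)|\,|\Upsilon_1(x) - \Upsilon_2(x)|\d{x} \;\leq\; \int |\Upsilon_1 - \Upsilon_2|\d{x} \;=\; \W(\Upsilon_1,\Upsilon_2),
\end{equation*}
and taking the supremum over 1-Lipschitz $f$ yields one inequality.

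For the $\leq$ direction, I would exhibit a specific 1-Lipschitz $f$ that attains the bound (up to approximation). The natural candidate is
\begin{equation*}
    f(x) := -\int_a^x \operatorname{sign}(\Upsilon_1(t) - \Upsilon_2(t))\d{t},
\end{equation*}
extended constantly on $(-\infty,a)$ and $(b,\infty)$. This $f$ is globally 1-Lipschitz and satisfies $f'(x) = -\operatorname{sign}(\Upsilon_1(x)-\Upsilon_2(x))$ a.e.\ on $[a,b]$, so the same IBP identity gives $\int f \d(\Upsilon_1 - \Upsilon_2) = \int |\Upsilon_1-\Upsilon_2|\d{x} = \W(\Upsilon_1,\Upsilon_2)$, completing the proof.

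The main obstacle is the integration-by-parts step itself, because $\Upsilon_1$ and $\Upsilon_2$ can have jump discontinuities (indeed, in our setting $\Psi$ is piecewise constant), so the Lebesgue-Stieltjes IBP formula must be invoked in a form that correctly handles atoms. One clean route is to note that $f$ is Lipschitz and hence absolutely continuous, so $\d f = f'\d x$ as a signed measure, and then the symmetric IBP formula $\int_{[a,b]} f\d G + \int_{[a,b]} G_{-}\d f = [fG]_a^b$ (with $G_-$ the left-continuous version, accounting for jumps) simplifies because $G(a^-)=G(b^+)=0$ and because the continuous integrator $\d f = f'\d x$ is insensitive to the distinction between $G$ and $G_-$. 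A secondary subtlety is that $\operatorname{sign}(G(t))$ need not be well-defined where $G$ vanishes, but this can be handled by assigning an arbitrary value on the zero set without affecting either integral.
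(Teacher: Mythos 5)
Your proposal is correct and follows essentially the same route as the paper's proof: integration by parts with vanishing boundary terms plus $|f'|\leq 1$ gives $\sup \leq \W$, and the explicit extremal function $f(x) = -\int \operatorname{sign}(\Upsilon_1-\Upsilon_2)\d{t}$ gives the reverse inequality. Your added care about the Lebesgue--Stieltjes integration by parts in the presence of atoms (resolved because $f$ is continuous, so $\d f = f'\d{x}$ has no atoms) is a point the paper simply asserts, but it does not change the argument.
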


The Jackson damped Chebyshev approximant satisfies a bound similar to Jackson's theorem for best polynomial approximation, and has decaying Chebyshev coefficients:
\begin{theorem}\label{thm:damped_cheb}
    Suppose \( f \) is 1-Lipschitz on \( [-1,1] \), \( \mu  = \mu_{-1,1}^T \), and we use Jackson's damping coefficients as in \cref{eqn:jackson_coeffs}.
    Then, for all $s>0$ and $0< k \leq s$,
    \begin{equation*}
    \| f - \ff[d]{f}{s} \|_{[-1,1]} \leq \frac{6}{s}
    ,\qquad
        \left|2 \int \ff[d]{f}{s} T_k \d\mu_{-1,1}^T \right|  \leq \frac{4}{\pi k}.
    \end{equation*}
\end{theorem}

\begin{proof}
The first result is due to a common proof of Jackson's theorem \cite[Theorem 1.4]{rivlin_81}.
By definition, we have $\int \ff[d]{f}{s} T_k \d\mu_{-1,1}^T = \rho_k^J\int f T_k \d\mu_{-1,1}^T$, and $|\rho_k^J|\leq 1$. 
Since $f$ is of bounded variation $2$ \cite[Theorem 7.1]{trefethen_19} asserts $|2\int f T_k \d\mu_{-1,1}^T|\leq 4(\pi k)^{-1}$ giving the second result. 
\end{proof}

The best polynomial approximation satisfies a similar bound $\min_{\deg(p) \leq s} \| f - p \|_{[-1,1]} \leq (\pi/2) (s+1)^{-1}$, and the constant $\pi/2$ is the best possible; see \cite[Section 87]{achieser_92} and \cite[Chapter 4]{cheney_00} for details.
Thus, \cref{thm:damped_cheb} gives a nearly optimal approximation.

We are now prepared to state our main theorem for spectrum approximation. 
\begin{theorem}\label{thm:spec_approx}
Let $c$ be as in \cref{thm:subgaussian}.
Suppose $\circ =  \textup{g}$ or $\circ = \textup{d}$ where $\mu = \mu_{\lmin,\lmax}^T$ and Jackson's damping is used.
Fix $\varepsilon>0$ and $\delta\in(0,1)$. 
Suppose
\begin{equation*}
s \geq \frac{36}{\varepsilon}
,\qquad
m \geq \frac{7 (\ln(s)+1)^2}{c n  \varepsilon^2}\ln\left(\frac{2s}{\delta}\right),
\end{equation*}
and $\{\vec{v}_\ell\}_{\ell=1}^{\nv}$ are iid samples from $\operatorname{Unif}(\mathbb{S}^{n-1})$.
Then
the output $\samp{\qq{\Psi_\ell}{s}}$ of \cref{alg:protoalg} satisfies, 
\begin{align*}
    \PP\big[\W(\Phi,\samp{\qq[]{\Psi_\ell}{s}})
    >  (\lmax-\lmin) \varepsilon \big]
    \leq 
    \delta.
\end{align*}
\end{theorem}

\begin{proof}
Without loss of generality, assume $\lmin=-1$ and $\lmax=1$.
Since $\samp{\Psi_{\ell}}$ is a probability distribution\footnote{While $\samp{\Psi_{\ell}}$ is random, since we assume $\|\vec{v}_\ell\|=1$, it is (almost) surely a probability distribution.}, inserting \cref{thm:damped_cheb} into \cref{thm:general_unif} we obtain a bound for all 1-Lipschitz $f$:
\begin{equation}\label{eqn:cheb_wass_near}
    \left| \int f \d \left( \Phi - \samp{\Psi_{\ell}} \right) \right|
    \leq 
    2 \left(\frac{6}{s}\right)  + \sum_{k=1}^{s} \frac{4}{\pi k} \left| \int T_k \d \left( \Phi - \samp{\Psi_{\ell}} \right)  \right|.
\end{equation}
Provided $\nv \geq (cn\eta^2\varepsilon^2)^{-1}\ln(2s/\delta)$ for some $\eta>0$ to be determined, \cref{thm:cheb_union} gives the bound 
\begin{equation}
\label{eqn:cheb_union_gq}
    \PP\left[ \exists k\in\{0,1,\ldots, s\}: \left| \int T_k \d\left(\Phi - \samp{\Psi_\ell} \right) \right| > \eta \varepsilon \right]
    \leq \delta.
\end{equation}
Now, note that $\int T_k \d\qq[g]{\Psi_\ell}{s} = \int T_k\d\Psi_\ell$ for $k\leq s$, so we can replace $\samp{\Psi_\ell}$ with $\samp{\qq[g]{\Psi_\ell}{s}}$ in \cref{eqn:cheb_wass_near,eqn:cheb_union_gq}.
Thus \cref{thm:wasserstein_1lip}, and the fact that $(1+1/2+\cdots + 1/s) \leq \ln(s)+1$ imply
\begin{equation}\label{eqn:Wphigq}
\PP\Big[\W(\Phi,\samp{\qq[g]{\Psi_\ell}{s}})
    > 12 s^{-1} + 4\pi^{-1} (\ln(s) + 1) \eta \varepsilon  \Big]
    \leq 
    \delta.
\end{equation}
We set $12 s^{-1} \leq \varepsilon/2$ and $4\pi^{-1}(\ln(s)+1)\eta\varepsilon = \varepsilon/2$.
This gives sufficient conditions $s \geq 24\pi^2\varepsilon^{-1}$ and $\eta = (\pi/8)( \ln(s)+1)^{-1} $.
Plugging in this value of $\eta$ into the bound for $m$ and noting that $64/\pi^2<7$ gives the stated bound for $\nv$.

When $\circ = \textup{d}$, observe that if $f$ is 1-Lipschitz, \cref{thm:damped_cheb} implies
\begin{equation}
\left|\int f \d\left( \samp{\Psi_\ell} - \samp{\qq[d]{\Psi_\ell}{s}} \right)\right|
= \left|\int (f - \ff[d]{f}{s}) \d \samp{\Psi_\ell} \right|
\leq \int \| f - \ff[d]{f}{s} \|_{[-1,1]} \d \samp{\Psi_\ell}
\leq \frac{6}{s}.
\end{equation}
Using \cref{thm:wasserstein_1lip}, this implies \( \W(\samp{\Psi_\ell},\samp{\qq[d]{\Psi_\ell}{s}}) \leq 12 s^{-1} \).
Then, we again use \cref{eqn:cheb_wass_near} to obtain a tail bound for $\W(\Phi,\samp{\Psi_\ell})$ analogously to \cref{eqn:Wphigq}.
Applying the triangle inequality gives
\begin{equation}\label{eqn:phipsil_close}
\PP\left[\W(\Phi,\samp{\qq[d]{\Psi_\ell}{s}})
    > 6s^{-1} + 12s^{-1} + 4\pi^{-1} (\ln(s) + 1) \eta\varepsilon  \right]
    \leq 
    \delta.
\end{equation}
We set $(6+12)s^{-1} \leq \varepsilon/2$ and $4\pi^{-1}(\ln(s)+1)\eta \varepsilon = \varepsilon/2$.
This gives sufficient conditions $s \geq 36\varepsilon^{-1}$ and $\eta = (\pi/8)( \ln(s)+1)^{-1} $.

Finally, to remove our assumption $\lmin=-1$ and $\lmax=1$, note that linearly scaling two distribution functions by a factor of $t$ scales the Wasserstein distance by a factor of $t$.
\end{proof}

Note that\cref{thm:spec_approx} implies that for \( \varepsilon \gg n^{-1/2} \) as \( n\to\infty \), a \emph{single} test vector is sufficient approximate the CESM \( \Phi \) in Wasserstein distance to accuracy \( \varepsilon \).

\subsection{Uniform spectral sum approximation}

We now study the task of spectral sum approximation by using \cref{thm:general_unif} to obtain a bound for bounded analytic functions.
It is apparent that the approach we take can be extended to other classes of functions, for instance differentiable functions.

\begin{definition}
    For \( \rho > 1 \) denote by \( E_\rho(a,b) \) the region bounded by the ellipse centered at \( \frac{a+b}{2} \) with semi-axis lengths \( \frac{b-a}{2} \frac{1}{2} (\rho + \rho^{-1}) \) and \( \frac{b-a}{2}\frac{1}{2} (\rho-\rho^{-1}) \) along the real and imaginary directions; i.e
    \begin{equation*}
        E_\rho(a,b) = \left\{ z \in\mathbb{C} : z = \frac{b-a}{2} \frac{1}{2} (u + u^{-1}) + \frac{a+b}{2}, \: u = r \exp(i\theta), \: r\in[0,\rho), \: \theta\in[0,2\pi) \right\}. 
    \end{equation*}
\end{definition}

\begin{definition}\label{def:analytic}
    We denote by $\mathsf{Anl}(M,\rho,a,b)$ the set of analytic functions bounded in modulus by $M$ on $E_\rho(a,b)$.
\end{definition}

Standard results in approximation theory bound the convergence of the Chebyshev approximant and its coefficients.
\begin{theorem}\label{thm:analytic}
Suppose $f\in\mathsf{Anl}(M,\rho,-1,1)$ and $\mu = \mu_{-1,1}^T$. 
Then, for all $s>0$
\[
\|f - \ff[a]{f}{s} \|_{[-1,1]}
\leq \frac{2M\rho^{-s}}{\rho-1}
,\qquad
\left|2 \int f T_s \d\mu_{-1,1}^T \right|
\leq 2M\rho^{-s}.
\]
\end{theorem}

\begin{proof}
The results are taken from \cite[Theorem 8.2]{trefethen_19} and \cite[Theorem 8.1]{trefethen_19} respectively.
\end{proof}

\begin{theorem}
\label{thm:unif_anl}
Let $c$ as in \cref{thm:subgaussian}.
Fix constants $M>0$ and $\rho > 1$. 
Suppose
\begin{equation*}
s\geq \ln \left( \frac{8M}{(\rho-1)\varepsilon} \right) \frac{1}{\ln(\rho)},
\qquad
m \geq \frac{16M^2}{c (\rho-1)^2 n\varepsilon^2}\ln\left(\frac{2s}{\delta}\right),
\end{equation*}
and $\{\vec{v}_\ell\}_{\ell=1}^{\nv}$ are iid samples from $\operatorname{Unif}(\mathbb{S}^{n-1})$.
Then
the output $\samp{\qq[g]{\Psi_\ell}{s}}$ of \cref{alg:protoalg} satisfies, 
\begin{equation*}
    \PP\left[ 
    \exists f\in\mathsf{Anl}(M,\rho,\lmin,\lmax) :
    \left| n^{-1}\tr(\fA) - \int f \d\samp{\qq[g]{\Psi_\ell}{s}} \right|> \varepsilon
    \right]
    \leq \delta.
\end{equation*}
\end{theorem}

\begin{proof}
Note that $f\in \mathsf{Anl}(M,\rho,a,b)$, then $\tilde{f}\in \mathsf{Anl}(M,\rho,-1,1)$, where $\tilde{f}(x) = f(\frac{b-a}{2}x + \frac{a+b}{2})$.
Thus, we can take a general symmetric matrix $\vec{A}$ and map it to $\tilde{\vec{A}} = \frac{2}{b-a}\vec{A} - \frac{a+b}{b-a}\vec{I}$ whose spectrum is contained in $[-1,1]$.
Then $f(\vec{A}) = \tilde{f}(\tilde{\vec{A}})$ and $\int f \d\samp{\qq[g]{\Psi_\ell}{s}} = \int \tilde{f} \d\samp{\qq[g]{\tilde{\Psi}_\ell}{s}}$.
Thus, it suffices to assume $\lmin=-1$ and $\lmax=1$.

Inserting \cref{thm:analytic} 
into \cref{thm:general_unif} (with $\Upsilon_1 = \Phi$, $\Upsilon_2 = \samp{\qq[g]{\Psi_\ell}{s}}$, and $\ff{f}{s}$ the Chebyshev approximant bounded in \cref{thm:analytic}) and applying the triangle inequality over the average, we obtain a bound for all $f\in\mathsf{Anl}(M,\rho,-1,1)$:
\begin{align}
    \left| \int f \d \left( \Phi - \samp{\qq[g]{\Psi_\ell}{s}} \right) \right|
    &\leq 
    2 \| f - \ff[a]{f}{s} \|_{[-1,1]}  + 2\sum_{k=1}^{s} \left| \int \ff[a]{f}{s} T_k\d\mu_{-1,1}^T \right| \left| \int T_k \d \left( \Phi - \samp{\qq[g]{\Psi_\ell}{s}} \right)  \right|
    \nonumber\\&\leq 2\left(\frac{2M\rho^{-s}}{\rho-1}\right) + \sum_{k=1}^{s} 2M\rho^{-k} \left| \int T_k \d \left( \Phi - \samp{\qq[g]{\Psi_\ell}{s}} \right)  \right|.
    \label{eqn:analytic_moments}
\end{align}

Provided $\nv \geq (cn\eta^2\varepsilon^2)^{-1}\ln(2s/\delta)$, \cref{thm:cheb_union} and the fact that the moments of $\samp{\Psi_\ell}$ and $\samp{\qq[g]{\Psi_\ell}{s}}$ agree through degree $s$ gives the bound
\begin{equation}
\label{eqn:cheb_unif_analytic}
    \PP\left[ \exists k\in\{0,1,\ldots, s\}: \left| \int T_k \d\left(\Phi - \samp{\qq[]{\Psi_\ell}{s}} \right) \right| > \eta\varepsilon \right]
    \leq \delta.
\end{equation}
Combining \cref{eqn:analytic_moments,eqn:cheb_unif_analytic} and using that $\sum_{k=1}^{s} \rho^{-k} \leq \sum_{k=1}^{\infty} \rho^{-k} = 1/(\rho-1)$, we obtain a bound
\begin{equation}
\PP\left[ \exists f\in\mathsf{Anl}(M,\rho,-1,1) :
 \left| \int f \d \left( \Phi - \samp{\qq[]{\Psi_\ell}{s}} \right) \right| 
> 
\frac{4M\rho^{-s}}{\rho-1} + \frac{2M \eta\varepsilon}{\rho-1}
\right]
\leq \delta.
\end{equation}
Setting ${4M \rho^{-s}}/({\rho-1})\leq \varepsilon/2$ and $2M\eta\varepsilon/(\rho-1) = \varepsilon/2 $ gives the conditions $s \geq \ln(8M / ((\rho-1)\varepsilon ) ) / \ln(\rho)$ and $\eta = (\rho-1) / (4M)$.
Plugging in this value of $\eta$ into the bound for $m$ gives the stated bound for $\nv$.
Finally, we note that $\tr(\fA) = n\int f\d\Phi$ by definition.
\end{proof}

Note that the number of samples $m$ required by \cref{thm:unif_anl} depends on $s$.
This is in contrast to bounds such as those in \cite{ubaru_chen_saad_17} based on the triangle inequality \cref{eqn:proto_triangle_f} which do not depend on $s$.
However, the benefit of \cref{thm:unif_anl} is that it holds simultaneously for all functions in $\mathsf{Anl}(M,\rho,\lmin,\lmax)$ rather than just a single function. 

One may hope to derive similar bounds for $\circ = \mathrm{a}$ or $\circ = \mathrm{i}$ for which the low-degree moments of $\samp{\Psi_\ell}$ and $\samp{\qq[]{\Psi_\ell}{s}}$ match.
However, the quadrature rules $\samp{\qq{\Psi_\ell}{s}}$ may not be probability distributions, so \cref{thm:general_unif} cannot be applied.
If the total variation of $\samp{\qq{\Psi_\ell}{s}}$ could be bounded, then a bound similar to \cref{thm:general_unif} could be used. 
However, we are unaware of how obtain a priori bounds.
On the other hand, the same technique could not be used to derive similar bounds for Jackson's damped KPM since the moments do not match. 
In fact, for Jackson's damped KPM, $s$ cannot depend logarithmically on $\varepsilon$ since the damped polynomial
approximations converge only algebraically; see e.g. \cref{fig:GQ_CC_tre08}.

In many situations, we may have some set of functions of interest for which we would like to apply \cref{thm:unif_anl}. 
If these functions are analytic on some Bernstin ellipse $E_\rho(a,b)$, then we can obtain bounds for the family by scaling the functions such that they all live in $\mathsf{Anl}(1,\rho,a,b)$.
We now illustrate this for the family $\{ \exp(-\beta x) : \beta \geq 0\}$.

\begin{corollary}\label{thm:exp_unif}
Let $c$ as in \cref{thm:subgaussian} and suppose $\lmax>\lmin>0$.
Let $\kappa:=\lmax/\lmin$ and suppose
\begin{equation*}
s\geq \sqrt{\kappa} \ln \left( \frac{4\sqrt{\kappa}}{\varepsilon} \right)  ,
\qquad
m \geq \frac{4\kappa}{c  n\varepsilon^2}\ln\left(\frac{2s}{\delta}\right),
\end{equation*}
and $\{\vec{v}_\ell\}_{\ell=1}^{\nv}$ are iid samples from $\operatorname{Unif}(\mathbb{S}^{n-1})$.
Then
the output $\samp{\qq[g]{\Psi_\ell}{s}}$ of \cref{alg:protoalg} satisfies, 
\begin{equation*}
    \PP\left[ 
    \exists \beta \geq 0 :
    \left| n^{-1}\tr(\exp(-\beta\vec{A}) - \int \exp(-\beta x) \d\samp{\qq[g]{\Psi_\ell}{s}} \right|> \varepsilon
    \right]
    \leq \delta.
\end{equation*}
\end{corollary}

\begin{proof}
    Define $\rho := (\sqrt{\kappa}+1)/(\sqrt{\kappa}-1)$.
    Note that $\exp(-\beta z)$ is entire and $|\exp(-\beta z)| = \exp(-\beta\operatorname{Re}(z))$.
    The leftmost point of the closure of $E_\rho(\lmin,\lmax)$ is 
    \[
    z^\star := \frac{\lmax-\lmin}{2} \frac{(-\rho)+(-\rho)^{-1}}{2} + \frac{\lmin+\lmax}{2}
    = 0.
    \]
    Hence, the supremum of $|\exp(-\beta z)|$ over $E_\rho(\lmin,\lmax)$ is $|\exp(-\beta z^\star)| = 1$. 
    Therefore, 
    \[
    \exp(-\beta x) \in \mathsf{Anl}(1,\rho,\lmin,\lmax).
    \]
    Now, note that 
    \[
    \frac{1}{\ln(\rho)} = \ln\left(\frac{\sqrt{\kappa}+1}{\sqrt{\kappa}-1}\right)^{-1}\leq \frac{\sqrt{\kappa}}{2}
    ,\qquad
    \frac{1}{\rho-1} = \frac{\sqrt{\kappa}-1}{2} 
    \leq \frac{\sqrt{\kappa}}{2}.
    \]
    Thus, applying \cref{thm:unif_anl} and inserting these bounds gives the result.
\end{proof}

We note that sharper bounds (e.g. for which the accuracy depends on $\beta$) for this family of functions are likely possible. 
We have focused on optimizing the simplicty of the statement of the result rather than its sharpness.

\section{Numerical experiments}
\label{sec:numerical_experiments}

We now provide a number of numerical experiments. 
While several of the examples (e.g. \cref{fig:GQ_CC_tre08,fig:Kneser_convergence,fig:spin_partition}) relate specifically to the theory developed in \cref{sec:main_bounds}, many are chosen to highlight subtle features of variants of \cref{alg:protoalg} which are beyond the current existing theory.
In particular, we aim to illustrate 
(i) Gaussian quadrature often does better than bounds based on best approximation over a single interval $[\lmin,\lmax]$, and
(ii) it can be useful to use $\mu$ other than the orthogonality distribution for the Chebyshev polynomials of the first kind.

In the numerical experiments, we will often compare algorithms based on the number of matrix-vector products used. 
Given $k$ matrix-vector products, one can construct $\mathcal{K}_{k}(\vec{A},\vec{v})$ and obtain the moments of $\Psi$ through degree $2k$.
This can be used to obtain a degree $s = 2k-1$ Gaussian quadrature rule or a degree $2k$ quadrature by interpolation or approximation rule.
Random vectors are drawn from the uniform distribution on the hypersphere.

\subsection{Comparison with classical quadrature}
\label{sec:classical}

We begin with some experiments meant to highlight the differences between classical quadrature and the quadrature algorithms studied in this paper.
The main takeaway of these experiments is that whether bounds based on polynomial approximation on a single interval $[\lmin,\lmax]$ are actually representative of the true behavior of \cref{alg:protoalg} depends on properties of the spectrum of $\vec{A}$ (and $f(x)$). 
In particular, algorithms based on Gaussian quadrature often exhibit convergence much faster than predicted by such bounds when the spectrum of $\vec{A}$ has gaps or outlying eigenvalues.

In addition, these experiments illustrate similarities and differences between the behavior of classical quadrature rules for continuous weight functions and the behavior of the matrix-free quadrature algorithms presented in this paper. 
First, the costs of the algorithms in this paper are determined primarily by the number of matrix-vector products.
This is because we typically only want to approximate \( \int f\d\Psi \) for a single, or perhaps a few, functions. 
On the other hand, the weight functions which classical quadrature rules approximate never change, so nodes and weights can be precomputed and the dominant cost becomes the cost to evaluate \( f \) at the quadrature nodes.
Second, while classical weight functions, such as the weight functions for Jacobi or Hermite polynomials, are typically relatively uniform in the interior of the interval of integration the weighted CESM \( \Psi \) may vary wildly from application to application. 
In some cases \( \Psi \) may resemble the distribution function of a classical weight function whereas in others it might have large gaps, jumps, and other oddities.

\begin{figure}
    \includegraphics[width=\textwidth]{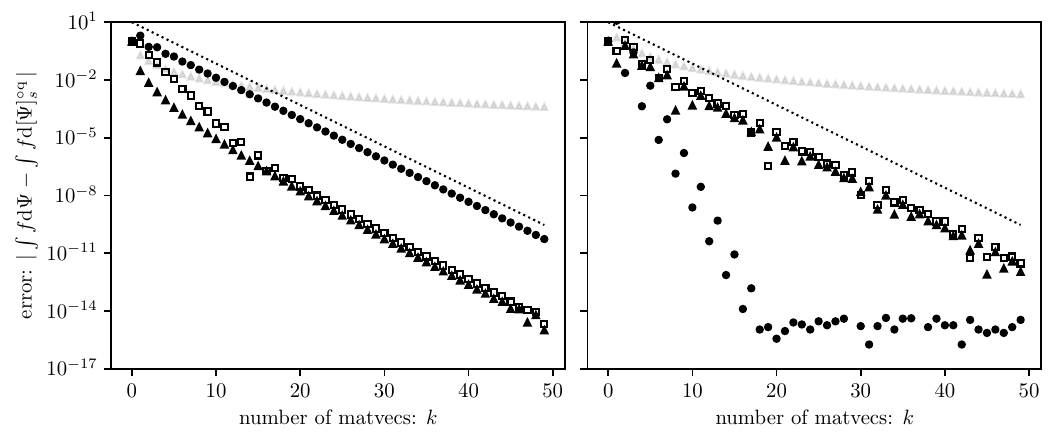}
    \caption{
    Errors for approximating \( \int f \d\Psi =  \vec{v}^\cT f(\vec{A}) \vec{v} \) when \( f(x) = 1/(1+16x^2) \) for a spectrum uniformly filling \( [-1,1] \) (left) and a spectrum with a gap around zero (right).
    \emph{Legend}: Gaussian quadrature 
    ({\protect\raisebox{0mm}{\protect\includegraphics[scale=.7]{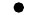}}}), 
    quadrature by interpolation 
    ({\protect\raisebox{0mm}{\protect\includegraphics[scale=.7]{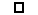}}}),
    quadrature by approximation 
    ({\protect\raisebox{0mm}{\protect\includegraphics[scale=.7]{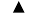}}}),
    Jackson's damped quadrature by approximation 
    ({\protect\raisebox{0mm}{\protect\includegraphics[scale=.7]{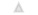}}}), and rate $O(\rho^{-2k})$, where $\rho = (1+\sqrt{17})/4$.
    The behavior of the algorithms are highly dependent on the eigenvalue distribution of \( \vec{A} \), but intuition about classical approximation theory informs our understanding.
    Gaussian quadrature may perform significantly better that explicit methods when the spectrum of \( \vec{A} \) has additional structure such as gaps. 
    }
    \label{fig:GQ_CC_tre08}
\end{figure}

Throughout this example, we use the Runge function \( f(x) = 1/(1+16x^2) \) and a vector \( \vec{v} \) with uniform weight on each component.
We will compare the effectiveness of the Gaussian quadrature rule \( \qq[g]{\Psi}{2k-1} \), the quadrature by interpolation rule \( \qq[i]{\Psi}{2k} \) and the quadrature by approximation rule \( \qq[a]{\Psi}{2k} \). 
For the latter approximations, we set \( \mu = \mu_{-1,1}^T \), and for the quadrature by approximation rule, we    approximate the involved integrals to essentially machine precision.
All three approximations can be computed using \( k \) matrix-vector products with \( \vec{A} \), and since the approaches exactly integrate polynomials of degree \( 2k-1 \) and \( 2k \) respectively, we might expect them to behave similarly.
However, there are a variety of factors which prevent this from being the case.

In our first experiment, shown on the left panel of \cref{fig:GQ_CC_tre08}, the spectrum of \( \vec{A} \) uniformly fills out the interval \( [-1,1] \); i.e., \( \lambda_i = -1+(2i+1)/n \), \( i=0,1,\ldots, n-1 \).
We take \( n=10^5 \) so that \( \qq[g]{\Psi}{2k-1} \) and \( \qq[i]{\Psi}{2k} \) respectively approximate the \( k \)-point Gaussian quadrature and \( (2k-1) \)-point Fej\'{e}r quadrature rules for a uniform weight on \( [-1,1] \).
For many functions, certain quadrature by interpolation rules on \( [-1,1] \), including the Fej\'{e}r rule, behave similarly to the Gaussian quadrature rule when the same number of nodes are used \cite{trefethen_08}.
For \( f(x) = 1/(1+16x^2) \), this phenomenon is observed for some time until the convergence rate is abruptly cut in half \cite{weideman_trefethen_07}.
In our setting, a fair comparison means that the number of matrix-vector products are equal, and so we see, in the left panel of \cref{fig:GQ_CC_tre08}, that the quadrature by interpolation approximation initially converges twice as quickly as the Gaussian quadrature approximation!
The rate of the quadrature by interpolation approximation is eventually cut in half to match the rate of the Gaussian quadrature approximation, $O(\rho^{-2k})$, where $\rho = (1+\sqrt{17})/4$ \cite{weideman_trefethen_07}.

In our second experiment, shown on the right panel of \cref{fig:GQ_CC_tre08}, the spectrum of \( \vec{A} \) uniformly fills out the disjoint intervals \( [-1,-.75] \cup [.75,1] \) with the same inter-point spacing as the first example; i.e. we remove the eigenvalues in the previous example which fall between \( -.75 \) and \( .75 \).
Here we observe that the Gaussian quadrature rule converges significantly faster than in the previous experiment.
This is to be expected. 
Indeed, the Gaussian quadrature rule has its nodes near \( [-1,-.75]\cup[.75,1] \), so the union of the support of \( \Psi \) and \( \qq[i]{\Psi}{2k} \) is further from the poles of \( f \) located at \( \pm \im /4 \).
We also note that the conditions which enabled accelerated convergence in the first experiment are no longer present, so the quadrature by interpolation approximation converges at its limiting rate \cite{trefethen_08}.

\subsection{Approximating sparse spectra}

If the spectrum of \( \vec{A} \) is \( S \)-sparse; i.e., there are only \( S \) distinct eigenvalues, then the \( k \)-point Gaussian quadrature rule will be exactly equal to the weighted CESM for all \( k \geq S \), at least in exact arithmetic.
Thus, the runtime required by SLQ is determined by \( S \) and the number of samples of the weighted CESM which are required to get a good approximation to the true CESM.
The interpolation and approximation based approaches, which are based on the orthogonal polynomials of some fixed distribution function \( \mu \), are unable to take advantage of such sparsity.
Indeed, unless the eigenvalues of \( \vec{A} \) are known a priori, such methods have fixed resolution due to the fixed locations of the zeros of the orthogonal polynomials with respect to \( \mu \).
Moreover, quadrature by approximation methods suffer from Gibbs oscillations unless a damping kernel is used, in which case the resolution is further decreased.

\begin{figure}
    \includegraphics[width=\textwidth]{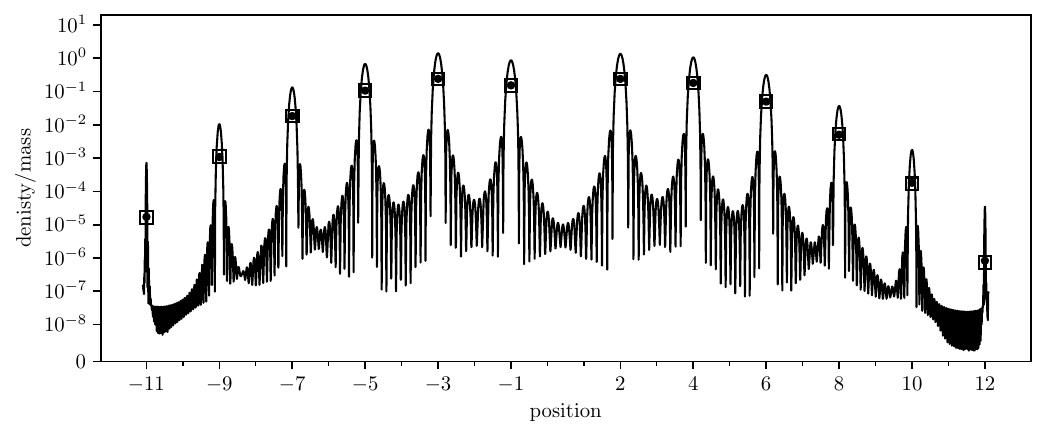}
    \caption{
    Approximations to a sparse spectrum with just \( 12 \) eigenvalues.
    \emph{Legend}:
    true spectral density  
    ({\protect\raisebox{0mm}{\protect\includegraphics[scale=.7]{imgs/legend/square_empty.pdf}}}), 
    Gaussian quadrature approximation: \( s=23 \)
    ({\protect\raisebox{0mm}{\protect\includegraphics[scale=.7]{imgs/legend/circle.pdf}}}), 
    damped quadrature by approximation: \( s=500 \)
    ({\protect\raisebox{0mm}{\protect\includegraphics[scale=.7]{imgs/legend/solid_thin.pdf}}}).
    The Gaussian quadrature produces an extremely good approximation using just \( 12 \) matrix-vector products.
    Even with many more matrix-vector products, damped quadrature by approximation does not have the same resolution. 
   }
    \label{fig:Kneser}
\end{figure}

\begin{figure}
    \includegraphics[width=\textwidth]{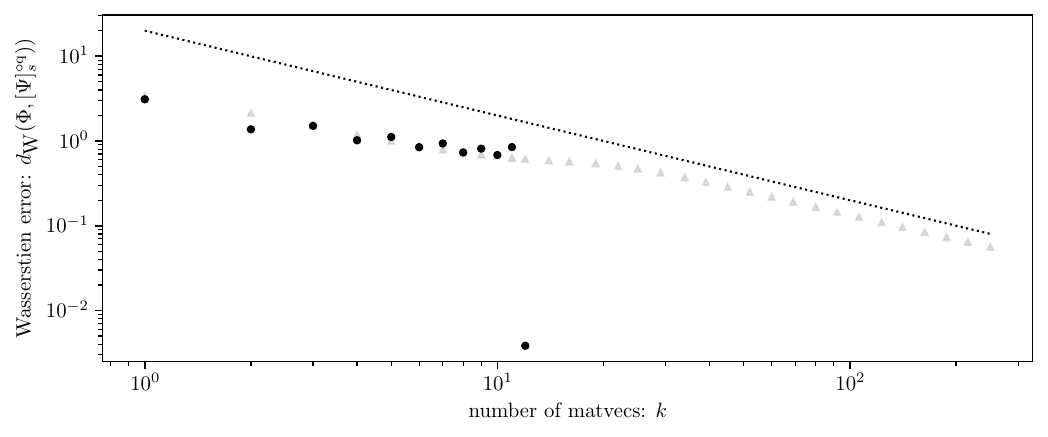}
    \caption{
    Wasserstien error for approximating CESM \( \Phi \) supported on just 12 points.
    \emph{Legend}: Gaussian quadrature 
    ({\protect\raisebox{0mm}{\protect\includegraphics[scale=.7]{imgs/legend/circle.pdf}}}), 
    Jackson's damped quadrature by approximation 
    ({\protect\raisebox{0mm}{\protect\includegraphics[scale=.7]{imgs/legend/tria.pdf}}}),
    and reference for rate $O(s^{-1})$    ({\protect\includegraphics[scale=.7]{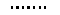}}).
    The Gaussian quadrature rule converges very quickly (in just 12 steps) to the true weigthed CESM, while the damped quadrature by approximation converges at a rate of $O(s^{-1})$.
    }
    \label{fig:Kneser_convergence}
\end{figure}

In this example, we approximate the CESM of the adjacency matrix of a Kneser graph.
The \( (N,K) \)-Kneser graph is the graph whose vertices correspond to size \( K \) subsets of \( \{1, 2, \ldots, N\} \) and whose edges connect vertices corresponding to disjoint sets.
It is not hard to see that the number of vertices is \( \binom{N}{K} \) and the number of edges is \( \frac{1}{2}\binom{N}{K}\binom{N-K}{K} \).
The spectrum of Kneser graphs is known as well. 
Specifically, there are \( K+1 \) distinct eigenvalues whose values and multiplicities are:
\begin{equation}
    \lambda_i = (-1)^i\binom{N-K-i}{K-i}
    ,\qquad
    m_i = \binom{N}{i} - \binom{N}{i-1}
    ,\qquad 
    i=0,1,\ldots, K,
\end{equation}
where $m_0$ is defiend to be 1.

We conduct a numerical experiment with \( N = 23 \) and \( K = 11 \), the same values used in \cite{large_matrices_density_review_18}.
This results in a graph with 1,352,078 vertices and 8,112,468 edges.
Thus, the adjacency matrix is highly sparse. 
We compare the Gaussian quadrature approximation with the damped quadrature by approximation. 
In both cases, we use a single random test vector \( \vec{v} \).
For the Gaussian quadrature, we set \( k = 12 \).
For the damped quadrature by approximation we set \( s=500 \) and use Jackson damping with \( \mu = \mu_{a,b}^T \), where \( a = -11.1 \) and \( b=12.1 \).

In \cref{fig:Kneser} we show what the approximations look like for fixed values of $s$.
Note that at $s=23$the Gaussian quadrature matches almost exactly despite having used only \( k=12 \) matrix-vector products, although the estimated weights of low-multiplicity eigenvalues have higher relative error.
On the other hand, even after \( s=500 \) matrix-vector products, the damped quadrature by approximation has a much lower resolution.

In \cref{fig:Kneser_convergence} we show the Wasserstien distance between the true CESM $\Psi$ and the Gaussian quadrature and damped quadrature by approximation rules at varying values of $s$. 
While the Gaussian quadrature is exact when $s = 23$ ($k=12$), the damped quadrature by approximation converges at a rate $O(s^{-1})$, matching the rate of the upper bound \cref{thm:spec_approx}.

\begin{remark}
There are sublinear time algorithms for approximate matrix-vector products with the (normalized) adjacency matrix.
Specifically, in a computational model where it is possible to 
(i) uniformly sample a random vertex in constant time, 
(ii) uniformly sample a neighbor of a vertex in constant time, and 
(iii) read off all neighbors of a vertex in linear time, 
then an \( \varepsilon_{\textup{mv}} \)-accurate approximate to the matrix-vector product with the adjacency matrix can be computed, with probability \( 1-\eta \), in time \( O(n (\varepsilon_{\textup{mv}})^{-2}\ln(\eta^{-1})) \).
For dense graphs, this is sublinear in the input size \( O(n^2) \) of the adjacency matrix.
See \cite{braverman_krishnan_musco_22,jin_karmarkar_musco_sidford_singh_24} for an analysis in the context of spectrum approximation.
\end{remark}

\subsection{Approximating ``smooth'' densities}

There are a range of settings in which the spectral density of \( \vec{A} \) is close to a smooth slowly varying density.
In such cases, we might hope that our approximation satisfies certain known criteria. 
For instance, that the approximation is also a slowly varying density, that the behavior of the approximation at the endpoints of the support satisfies the right growth or decay conditions, etc.
In this example, we consider how parameters in \cref{alg:protoalg} can be varied so that the resulting approximation enjoys certain desired properties.

One setting in which \( \vec{A} \) may have a slowly varying density is when \( \vec{A} \) is a large random matrix. 
We begin this example by considering a sample covariance matrix
\begin{equation}
    \vec{A}_n = \frac{1}{\nv} \vec{\Sigma}^{1/2} \vec{X} \vec{X}^\cT \vec{\Sigma}^{1/2}
\end{equation}
where \( \vec{X} \) is random and \( \vec{\Sigma} \) is deterministic. 
Specifically, we fix constants \( \sigma > 1 \) and \( d \in (0,1) \), define \( m = n/d \), and take \( \vec{X} \) to be a \( n\times m \) matrix with iid standard normal entries and \( \vec{\Sigma} \) a diagonal matrix with \( 1/m \) as the first \( n/2 \) entries and \( \sigma/m \) as the last \( n/2 \) entries.

In the limit, as \( n\to\infty \), the CSEM \( \Phi_n \) of \( \vec{A}_n \) is convergent to distribution \( \Psi_\infty \) supported on two disjoint intervals \( [a_1,b_1] \cup [a_2,b_2] \), where \( a_1 < b_1 < a_2 < b_2 \), with equal mass on each \cite{bai_silverstein_98}.
The spectral edges are equal to the values at which 
\begin{equation}
    x\mapsto  -\frac{1}{x} + \frac{d}{2} \left( \frac{1}{x+1} + \frac{1}{x+\sigma^{-1}} \right)
\end{equation}
attains at its local extrema.
Moreover, it is known that \( \d\Psi_\infty/\d x \) has square root behavior at the spectral edges.

Because we know the support of the desired density, and because we know the behavior at the spectral edges, a natural choice is to use quadrature by approximation with 
\begin{equation}
    \mu(x) = \tfrac{1}{2}\mu_{a_1,b_1}^U(x) + \tfrac{1}{2} \mu_{a_2,b_2}^U(x)
\end{equation}
where \( \mu_{a,b}^U \) is the weight function for the Chebyshev polynomials of the second kind given by
\begin{equation}
    \mu_{a,b}^U(x) 
    = \int_{a}^{x} \frac{4}{\pi (b-a)} \sqrt{1-\left( \frac{2}{b-a}t - \frac{b+a}{b-a}\right)} \d t.
\end{equation}
This will ensure that the Radon--Nikodym derivative \( \d\Psi_\infty / \d\mu \) is of order 1 at the spectral edges which seems to result in better numerical behavior than if we were to use a KPM approximation corresponding to a density which explodes at the spectral edges.

To compute the Jacobi matrix for \( \mu \), make sure of the fact that the product
\begin{equation}
    \int p \d\mu
    = \frac{1}{2} \int p \d\mu_{a_1,b_1}^U
    + \frac{1}{2} \int p \d\mu_{a_2,b_2}^U
\end{equation}
can be computed \emph{exactly} by applying a sufficiently high degree quadrature rule to each of the right hand side integrals. 
We can then apply the Lanczos algorithm to compute the orthogonal polynomials.
Some potentially more computationally efficient approaches are outlined in \cite{fischer_golub_91}.

\begin{figure}
    \includegraphics[width=\textwidth]{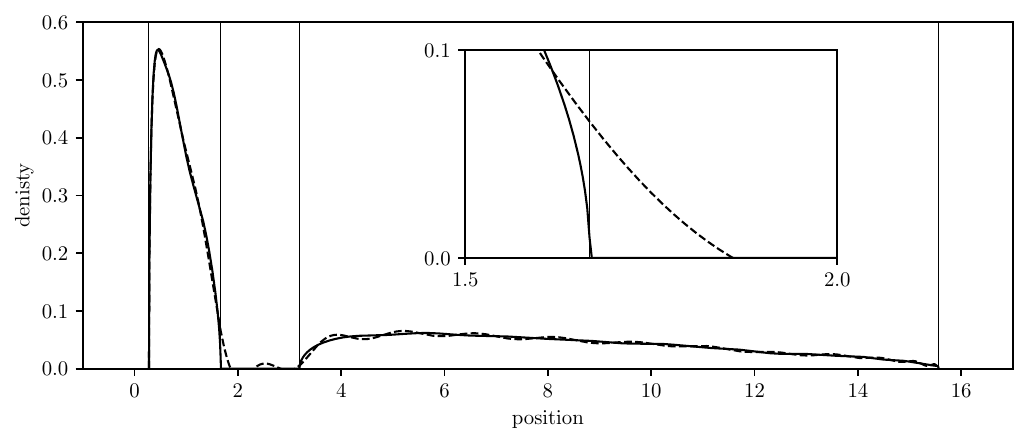}
    \caption{
        Approximations to a ``smooth'' spectrum using quadrature by approximation with various choices of \( \mu \).
        \emph{Legend}:
        \( \mu = \mu_{a_1,b_2}^U \) ({\protect\raisebox{0mm}{\protect\includegraphics[scale=.7]{imgs/legend/dash_thin.pdf}}}),
        \( \mu = \frac{1}{2} \mu_{a_1,b_1}^U + \frac{1}{2} \mu_{a_2,b_2}^U \)({\protect\raisebox{0mm}{\protect\includegraphics[scale=.7]{imgs/legend/solid_thin.pdf}}}).
        A priori knowledge about properties of the spectrum allow better choices of parameters such as \( \mu \). 
    }
    \label{fig:RM_split_AQ}
\end{figure}

We conduct a numerical experiment with \( n = 10^4 \), \( d=0.3 \), and $\sigma = 8$.
We use \( k=15 \) and average over 10 trials, resampling \( \vec{A}_n \) in each trial.
To generate an approximation to the density we expand the support of the limiting density by \( 0.001 \) at each endpoint to avoid eigenvalues of \( \vec{A}_n \) lying outside the support of \( \mu \).
In \cref{fig:RM_split_AQ} we show the approximations with \( \mu = \mu_{a_1,b_2}^U \) and \( \mu = \frac{1}{2} \mu_{a_1,b_1}^U + \frac{1}{2} \mu_{a_2,b_2}^U \).
As shown in the inset image of \cref{fig:RM_split_AQ}, we observe that the approximation with \( \mu = \frac{1}{2} \mu_{a_1,b_1}^U + \frac{1}{2} \mu_{a_2,b_2}^U \) exhibits the correct square root behavior at the endpoints as well as fewer oscillations throughout the interior of the support of the density.

\begin{remark}
In recent work \cite{ding_trogdon_21} it was shown how Lanczos performs on such a sample covariance matrix. 
In particular, one sample from Stochastic Lanczos Quadrature will converge almost surely, as \( n\to\infty \), to the desired distribution.
In this same work another density approximation scheme was proposed based on Stieltjes transform inversion. 
Analysis and comparison for this method is an interesting open problem.
\end{remark}

\subsubsection{Smoothing by convolution}
\label{sec:smoothing}

Quadrature by interpolation produces a sum of weighted Dirac delta functions.
A simple approach to obtain a density function from a distribution function involving point masses is to approximate each point masses with some concentrated probability density function; e.g. Gaussians with a small variance \cite{lin_saad_yang_16,ghorbani_krishnan_xiao_19}.
This is simply convolution with this distribution, and if the smoothing distribution has small enough variance, the Wasserstein distance between the original and smoothed distributions will be small. 
 Specifically, we have the following standard lemma which we prove in \cref{sec:smoothing_wass} of the supplementary materials:
\begin{lemma}
    \label{thm:smoothing_wass}
    Given a smooth positive probability distribution function \( G_\sigma \), define the smoothed approximation \( \Upsilon_\sigma \) to \( \Upsilon \) by the convolution
    \begin{equation}
        \Upsilon_\sigma(x)  = \int_{-\infty}^{\infty} G_\sigma(x-t) \d\Upsilon(t).
    \end{equation}
    Then, \( \W(\Upsilon,\Upsilon_\sigma) \leq \W(\bOne(\:\cdot < 0), G_\sigma) \).
    Moreover, if \( G_\sigma \) has median zero and standard deviation \( \sigma \), then \( \W(\Upsilon,\Upsilon_\sigma)  \leq \sigma \). 
\end{lemma}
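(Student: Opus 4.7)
The plan is to exploit the fact that both $\Upsilon$ and $\Upsilon_\sigma$ can be written as convolutions of $\Upsilon$ with a single distribution function, reducing the Wasserstein distance bound to a one-dimensional integral estimate. The key observation is that if $H$ denotes the Heaviside step function (the distribution function of a unit point mass at $0$), then $\Upsilon(x) = \int H(x-t)\,\d\Upsilon(t)$, while by definition $\Upsilon_\sigma(x) = \int G_\sigma(x-t)\,\d\Upsilon(t)$. Thus
\[
\Upsilon(x) - \Upsilon_\sigma(x) = \int \bigl(H(x-t) - G_\sigma(x-t)\bigr)\,\d\Upsilon(t).
\]

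For the first inequality, I would take absolute values, integrate against $\d x$, and use the triangle inequality for integrals followed by Fubini's theorem:
\[
\W(\Upsilon,\Upsilon_\sigma) = \int \left| \int (H-G_\sigma)(x-t)\,\d\Upsilon(t) \right| \d x \leq \int \left( \int |H-G_\sigma|(x-t)\,\d x \right) |\d\Upsilon(t)|.
\]
A translation in the inner integral gives $\int |H - G_\sigma|(u)\,\d u = \W(H,G_\sigma)$, which is independent of $t$, and the outer integral collapses to $\TV(\Upsilon)$. This produces the first claimed bound. Fubini's theorem is justified by the non-negativity of the integrand after taking absolute values, together with $\TV(\Upsilon) < \infty$ and integrability of $|H - G_\sigma|$ (which follows from $G_\sigma$ having a finite first moment, itself a consequence of finite standard deviation assumed later).

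For the ``moreover'' clause, I would evaluate $\W(H,G_\sigma)$ explicitly by splitting the integral at $0$. Since $H(u) = 0$ for $u<0$ and $H(u)=1$ for $u \geq 0$, and since $0 \leq G_\sigma \leq 1$, we can drop the absolute values to obtain
\[
\W(H,G_\sigma) = \int_{-\infty}^{0} G_\sigma(u)\,\d u + \int_{0}^{\infty} (1-G_\sigma(u))\,\d u = \EE\bigl[\max(-Z,0)\bigr] + \EE\bigl[\max(Z,0)\bigr] = \EE|Z|,
\]
where $Z \sim G_\sigma$; the middle equality uses the standard tail-integral representation of expectations of non-negative random variables. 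Now the pivot from median to mean: since the median minimizes $c \mapsto \EE|Z - c|$ and the median of $Z$ is $0$, we have $\EE|Z| \leq \EE|Z - \EE[Z]|$, and then Jensen's inequality (or equivalently Cauchy--Schwarz) gives $\EE|Z - \EE[Z]| \leq \sqrt{\operatorname{Var}(Z)} = \sigma$. Chaining these yields $\W(H,G_\sigma) \leq \sigma$, which combined with the first bound proves the second.

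There is no serious obstacle; the proof is essentially two short computations once the convolutional structure is written down. The only subtle point worth flagging is the median-to-standard-deviation step, which requires the two standard but slightly nontrivial facts that the median is the $L^1$-minimizer and that $L^1 \leq L^2$ for centered random variables. One should also be slightly careful about sign conventions in the notation $\bOne(\,\cdot < 0)$ versus $\bOne(\,\cdot \geq 0)$, but since $\W$ is invariant under the transformation $F \mapsto 1-F$ (both functions have the same total variation away from the other distribution), this is a cosmetic issue.
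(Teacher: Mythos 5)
Your proof is correct, and there is nothing in the paper to compare it against: the paper states this lemma without proof, labelling it ``standard,'' so your argument (the convolution identity \( \Upsilon(x) = \int \bOne(x-t\geq 0)\,\d\Upsilon(t) \), triangle inequality plus Tonelli to pull out \( \TV(\Upsilon) \), the tail-integral identity \( \W(\bOne(0\leq\cdot),G_\sigma)=\EE|Z| \), and then the median-as-\(L^1\)-minimizer step followed by Jensen to get \( \EE|Z|\leq\EE|Z-\EE Z|\leq\sigma \)) is exactly the standard derivation the authors are implicitly invoking, and it handles signed \( \Upsilon \) correctly by working with \( |\d\Upsilon| \).

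One small caution on your closing remark: the claim that \( \W \) is invariant under \( F\mapsto 1-F \) is false (e.g.\ \( \int|1-2G_\sigma|\,\d x \) diverges while \( \W(G_\sigma,G_\sigma)=0 \)), so it cannot be used to reconcile the notation. The correct resolution is simply that the paper's \( \bOne(\:\cdot<0) \), read literally, would make the right-hand side infinite and the ``moreover'' clause unrecoverable, so it must be read as the Heaviside distribution function \( \bOne(0\leq\cdot) \) of a unit mass at zero—which is what your proof uses throughout, so the body of your argument is unaffected.
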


\begin{figure}
\includegraphics[width=\textwidth]{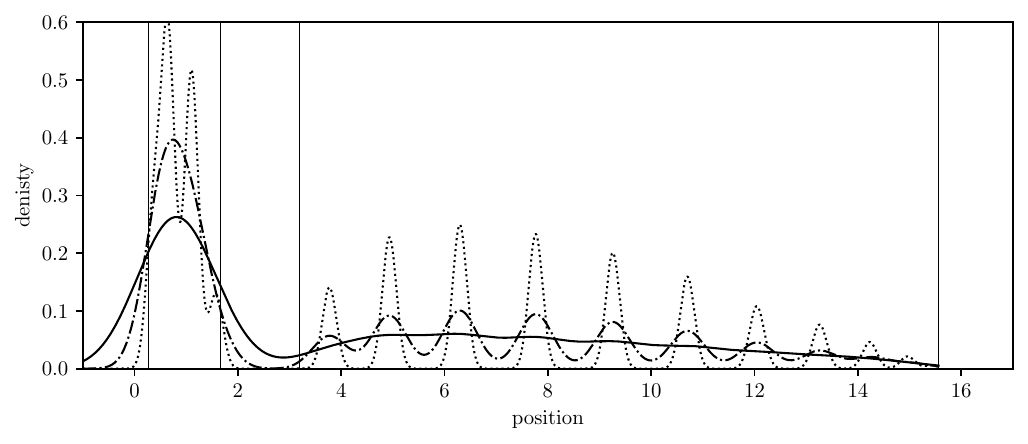}
\caption{
    Approximations to a ``smooth'' spectrum using smoothed Gaussian quadrature for various smoothing parameters \( \sigma \).
    The true limiting density is approximately supported on $[0.28,1.67]\cup[ 3.19,15.6]$.
    \emph{Legend}:
    \( \sigma = 2/k \) ({\protect\raisebox{0mm}{\protect\includegraphics[scale=.7]{imgs/legend/dot_thin.pdf}}}),
    \( \sigma = 5/k \) ({\protect\raisebox{0mm}{\protect\includegraphics[scale=.7]{imgs/legend/dash_thin.pdf}}}),
    \( \sigma = 10/k \) ({\protect\raisebox{0mm}{\protect\includegraphics[scale=.7]{imgs/legend/solid_thin.pdf}}}).
    Obtaining a density from a discrete distribution via convolutions isn't always a good idea.
}
\label{fig:GQ_conv_smoothing}
\end{figure}

It is well known that if \( G_\sigma \) is differentiable, then the smoothed distribution function \( \Upsilon_\sigma \) will also be differentiable.
Thus, we can obtain a density function \( \d\Upsilon_\sigma/\d{x} \) even if \( \Upsilon \) has discontinuities.
Moreover, the bounds obtained earlier can easily be extended to smoothed spectral density approximations obtained by convolution using the triangle inequality.

While the smoothing based approach has a simple theoretical guarantee in Wassetherstein distance, it does not need to provide a good approximation to the density. 
Indeed, if the variance of the smoothing kernel is too small, then the smoothed distribution will still look somewhat discrete. 
On the other hand, if the variance of the smoothing kernel is too large, then the smooth distribution will become blurred out and lose resolution.
As shown in \cref{fig:GQ_conv_smoothing}, this is particularly problematic if different parts of the spectrum would naturally require different amounts of smoothing.

There are of course many different smoothing schemes that could be used. 
These include adaptively choosing the variance parameter based on the position in the spectrum, using a piecewise constant approximation to the density, interpolating the distribution function with a low degree polynomial or splines, etc. 
Further exploration of these approaches is beyond the scope of this paper since they would likely be context dependent. 
For instance, in random matrix theory, it may be desirable to enforce square root behavior at endpoints whereas in other applications it may be desirable to have smooth tails.
More broadly, as illustrated in several of the numerical experiments above, approaches based on quadrature by approximation, with carefully chosen $\mu$, seem to have good potential.

We conclude with the remark that alternate metrics of closeness, such as the total variation distance, are likely better suited for measuring the quality of approximations to ``smooth'' densities.
However, since the actual spectral density \( \d\Psi/\d{x} \) is itself the sum of Dirac deltas, some sort of regularization is required to obtain a proper density \cite{lin_saad_yang_16} which of course relates closely to what it actually means to be ``close to a smooth slowly varying density''.
A rigorous exploration of this topic would be of interest.

\subsubsection{Handling spikes}

In some situations, one may encounter spectra which are nearly ``smooth'' except at a few points where there are large jumps in the CESM. 
For instance, low rank matrices may have many repeated zero eigenvalues. 

To model such a situation, we consider a matrix 
\begin{equation}
    \vec{A}_n = 
    \begin{bmatrix}
        m^{-1} \vec{X}\vec{X}^\cT & \vec{0} \\
        \vec{0} & z\vec{I} + \sigma \vec{D}
    \end{bmatrix}
\end{equation}
where \( \vec{X} \) is a \( n'\times m \) matrix with standard normal entries and \( \vec{D} \) is a \( (n-n')\times (n-n') \) diagonal matrix with standard normal entries. 
In both cases,  \( m = n'/d \) for some fixed \( d\in(0,1) \). 
While this particular matrix is block diagonal, the protoalgorithm is mostly oblivious to this structure and would work similarly well if the matrix were conjugated by an arbitrary unitary matrix so that the block diagonal structure were lost.

When \( n\to\infty \) and \( \sigma \to 0 \), the spectral density \( \d\Phi_n/\d{x} \) is convergent to a density \( \d\Phi_\infty/\d{x} \) equal to the sum of a scaled Marchenko--Pastur distribution and a weighted Dirac delta distribution. 
Thus, a natural approach would be to use quadrature by approximation with $\mu$ defined by
\begin{equation}
    \mu(x) = (1-p) \mu_{a,b}^{U}(x) + p \delta(x-z).
\end{equation}
As above, we can use a modified version of the Vandermonde with Arnoldi approach to compute the orthogonal polynomials with respect to \( \mu \). 
The resulting approximation to the ``smooth'' part of the density \( \d\Phi/\d{x} \) is shown in \cref{fig:RM_spike}.

\begin{figure}
\includegraphics[width=\textwidth]{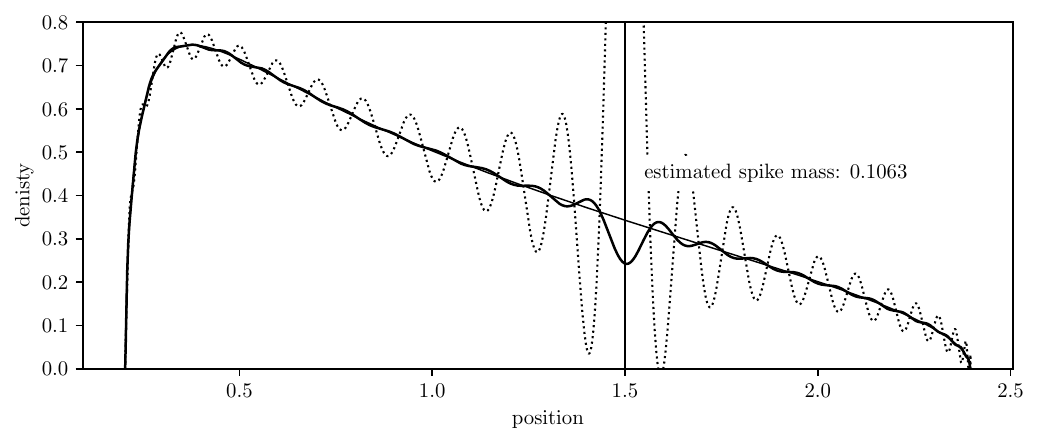}
\caption{
    Approximations to a ``smooth'' spectrum with a spike using quadrature by approximation with various choices of \( \mu \).
    \emph{Legend}:
    absolutely continuous part of true limiting density ({\protect\raisebox{0mm}{\protect\includegraphics[scale=.7]{imgs/legend/solid_thin.pdf}}}),
    quadrature by approximation: \( \mu = (1-p) \mu_{a,b}^{U} + p \delta(\:\cdot -z) \) ({\protect\raisebox{0mm}{\protect\includegraphics[scale=.7]{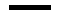}}}),
    quadrature by approximation: \( \mu = \mu_{a,b}^U \)({\protect\raisebox{0mm}{\protect\includegraphics[scale=.7]{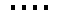}}}).
    A priori knowledge of the location of a singularity allows for a better approximation to the absolutely continuous part of the spectrum. 
    }
\label{fig:RM_spike}
\end{figure}

We set \( n = 10^6 \), \( n' = n/10 \), \( d = 0.3 \), \( z=1.5 \), and \( \sigma = 10^{-10} \).
As before, we average of \( 10 \) trials where \( \vec{A}_n \) is resampled in each trial.
For each sample, we compute the quadrature by approximation with \( k=25 \) for \( \mu = (1-p) \mu_{a,b}^{U} + p \delta(x-z) \) with \( p = 0.2 \) and \( \mu = \mu_{a,b}^U \).
We approximate $\delta(x-z)$ by $\mu_{z-\epsilon,z+\epsilon}^U$ for $\epsilon = 0.001$, and take $a,b$ as the support of the limiting Marchecnko--Pastur distribution expanded by $0.001$.
The results are show in \cref{fig:RM_spike}.

Clearly, accounting for the spike explicitly results in a far better approximation to the density.
Note that this approach does not require that the mass of the spike is accurately matched in the reference distribution $\mu$. 
For instance, in our example, we set $p$ to be 0.2 while the actual mass is 0.1. Even so, the approximation is able to correct this and we still recover essentially the correct mass.
On the other hand, if the location of the spike is incorrectly estimated, then the approximation to the density may have massive oscillations.
In our example the spike has width roughly \( 10^{-10} \) which does not cause issues for the value of \( s \) used. 
However, if \( s \) is increased, the width of the spike is increased, or the location of the estimate of the spike is offset significantly, then the existing oscillations become large.
Approaches for adaptively finding the location of spikes would be an interesting area of further study; see \cite{chen_23} for an adaptive variant of the KPM which works towards this goal.

\subsection{Thermodyanmics of spin systems}
\label{sec:spin}

The quantum Heisenberg model can be used to study observables of magnetic systems \cite{weisse_wellein_alvermann_fehske_06,schnalle_schnack_10,schnack_richter_steinigeweg_20,schulter_gayk_schmidt_honecker_schnack_21}.
For a system with \( N \) spins of spin number \( S \), the Heisenberg spin Hamiltonian is an operator on a Hilbert space of dimension \( (2S+1)^N \) given by
\begin{equation}
    \vec{H} = \sum_{i=0}^{N-1} \sum_{j=0}^{N-1} \left( 
     [\vec{J}^{\textup{x}}]_{i,j} \vec{s}^{\textup{x}}_i  \vec{s}^{\textup{x}}_j 
    +[\vec{J}^{\textup{y}}]_{i,j}  \vec{s}^{\textup{y}}_i  \vec{s}^{\textup{y}}_j
    +[\vec{J}^{\textup{z}}]_{i,j} \vec{s}^{\textup{z}}_i  \vec{s}^{\textup{z}}_j
    \right).
\end{equation}
Here \( \vec{s}^\sigma_i \) gives the component spin operator for the \( i \)-th spin site and acts trivially on the Hilbert spaces associated with other spin sites but as the \( (2S+1)\times (2S+1) \) component spin matrix \( \vec{s}^{\sigma} \) on the \( i \)-th spin site.
Thus, \( \vec{s}^\sigma_i \) can be represented in matrix form as
\begin{equation}
    \vec{s}^\sigma_i
    = \underbrace{\vec{I} \otimes \cdots \otimes \vec{I}}_{i\text{ terms}} 
    \otimes ~ \vec{s}^\sigma \otimes 
    \underbrace{\vec{I} \otimes \cdots \otimes \vec{I}}_{N-i-1\text{ terms}}.
\end{equation}

\begin{figure}
    \includegraphics[width=\textwidth]{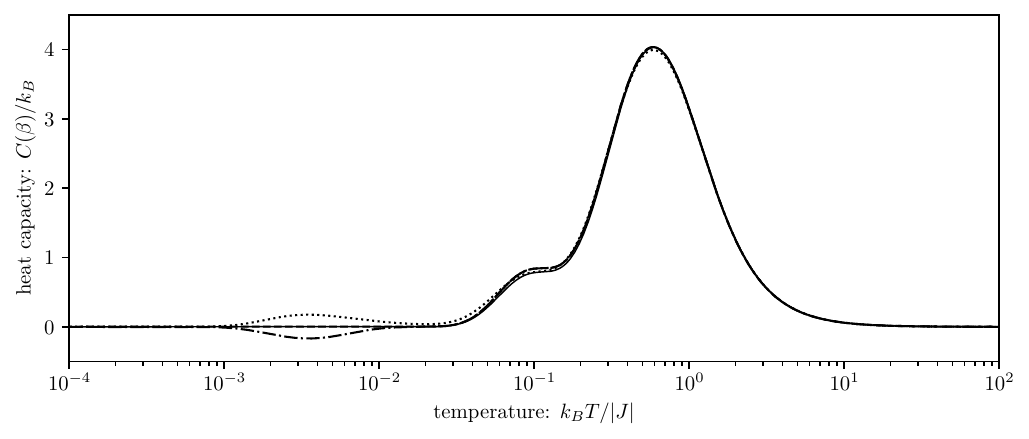}
    \caption{
    Heat capacity as a function of temperature for a small spin system.
    \emph{Legend}:
    exact diagonalization ({\protect\raisebox{0mm}{\protect\includegraphics[scale=.7]{imgs/legend/solid_thin.pdf}}}),
    Gaussian quadrature
    ({\protect\raisebox{0mm}{\protect\includegraphics[scale=.7]{imgs/legend/dash_thin.pdf}}}),
    quadrature by approximation 
    ({\protect\raisebox{0mm}{\protect\includegraphics[scale=.7]{imgs/legend/dashdot_thin.pdf}}}),
    damped quadrature by approximation 
    ({\protect\raisebox{0mm}{\protect\includegraphics[scale=.7]{imgs/legend/dot_thin.pdf}}}).
    While damping produces a physical result, the resulting ghost bump may be more difficult to identify than the nonphysical ghost dip obtained without damping.
    }
    \label{fig:spin_heat_capacity}
\end{figure}

The CESM of \( \vec{H} \) gives the energy spectrum of the system and can be used to compute many important quantities.
For instance, given an observable \( \vec{O} \) (i.e. a Hermitian matrix), the corresponding thermodynamic expectation of the observable is given by
\begin{equation}
    \frac{\tr(\vec{O}\exp(-\beta \vec{H}))}{\tr(\exp(-\beta\vec{H}))}.
\end{equation}

Quantities depending on observables which are matrix functions \( \vec{H} \) can be written entirely in terms of matrix functions of \( \vec{H} \).
For instance, with \( \beta = (k_B T)^{-1} \), where \( k_B \) is the Boltzmann constant and \( T \) is the temperature, the system heat capacity is given by
\begin{equation}
    \frac{C(T)}{k_B}
    = 
    \frac{\tr\left( (\beta\vec{H})^2 \exp(-\beta \vec{H}) \right)}{\tr \left( \exp(-\beta \vec{H}) \right)}
    -\left[ \frac{\tr\left( \beta \vec{H} \exp(-\beta \vec{H}) \right)}{\tr \left( \exp(-\beta \vec{H}) \right)}\right]^2.
\end{equation}
Thus, for fixed finite temperature, evaluating the heat capacity amounts to evaluating several matrix functions. 

\begin{figure}
    \includegraphics[width=\textwidth]{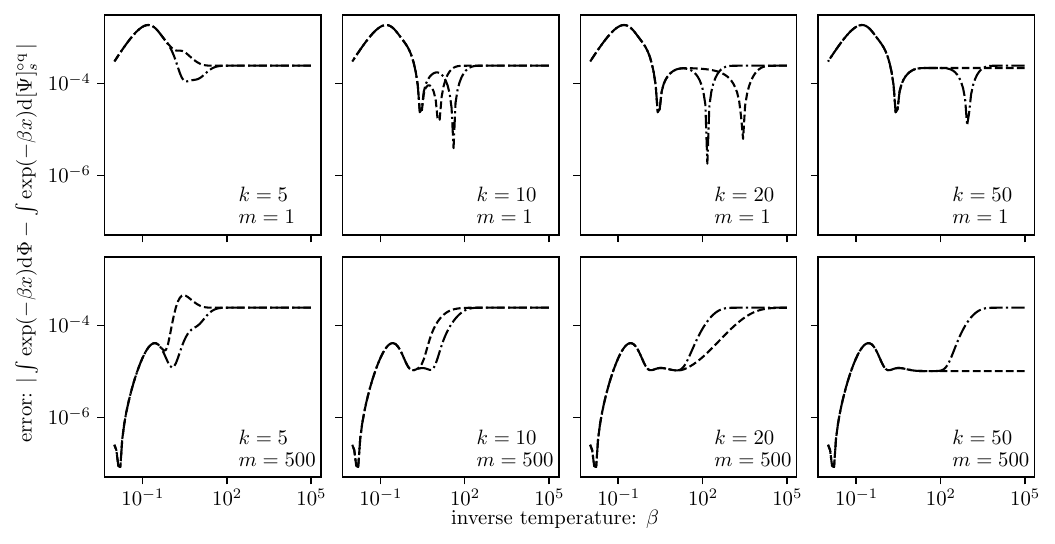}
    \caption{
    Error of partition function $\tr(\exp(-\beta \vec{A}))$ as a function of inverse temperature $\beta$ for a small spin system for various choices of $k$ and $m$.
    \emph{Legend}:
    Gaussian quadrature
    ({\protect\raisebox{0mm}{\protect\includegraphics[scale=.7]{imgs/legend/dash_thin.pdf}}}),
    quadrature by approximation 
    ({\protect\raisebox{0mm}{\protect\includegraphics[scale=.7]{imgs/legend/dashdot_thin.pdf}}}).
    As guaranteed by \cref{thm:unif_anl} (see also \cref{thm:exp_unif}), the Gaussian quadrature method is simultaneously accurate for an infinite number different functions. 
    This also appears to be the case for the quadrature by approximation method, although to a lesser extent. 
    }
    \label{fig:spin_partition}
\end{figure}

In some cases, symmetries of the system can be exploited to diagonalize or block diagonalize \( \vec{H} \) \cite{schnalle_schnack_10}. 
Numerical diagonalization can be applied to blocks to obtain a full diagonalization.
Even so, the exponential dependence of the size of \( \vec{H} \) on the number of spin sites \( N \) limits the size of systems which can be treated in this way. 
Moreover, such techniques are not applicable to all systems. 
Thus, approaches based on \cref{alg:protoalg} are widely used; see \cite{schnack_richter_steinigeweg_20} for examples using a Lanczos based approach and \cite{schulter_gayk_schmidt_honecker_schnack_21} for examples using a Chebyshev based approach.

In this example, we consider a Heisenberg ring (\( [\vec{J}^{\textup{x}}]_{i,j} = [\vec{J}^{\textup{y}}]_{i,j} = [\vec{J}^{\textup{z}}]_{i,j} = \bOne(|i-j| = 1 \mod{N}) \)) with \( N = 12 \) and \( S = 1/2 \).
Similar examples, with further discussion in the context of the underlying physics, are considered in \cite{schnack_richter_steinigeweg_20,schulter_gayk_schmidt_honecker_schnack_21}.
We take \( k = 50 \) and \( \nv = 300 \) and compute approximations to the heat capacity at many temperatures using Gaussian quadrature, quadrature by interpolation, and damped quadrature by interpolation. 
For the latter two approximations we use \( \mu = \mu_{a,b}^T \) where \( a \) and \( b \) are chosen based on the nodes of the Gaussian quadrature. 
Note that averages over random vectors are computed for each trace rather than for \( C(\beta) \), and that we use the same vectors for all four traces. 
The results are shown in \cref{fig:spin_heat_capacity}.

We note the presence of a nonphysical ``ghost dip'' in the quadrature by approximation. 
If the approximation to the CESM is non-decreasing, the Cauchy--Schwarz inequality guarantees a positive heat capacity.
Thus, when we use Jackson's damping, the heat capacity remains positive for all temperatures.
However, as noted in \cite{schulter_gayk_schmidt_honecker_schnack_21}, this is not necessarily desirable as the ghost dip is easily identifiable while the ghost peak may be harder to identify.
We remark that it would be interesting to provide a posteriori bounds for the accuracy of the approximations to the quantity \( \tr(\vec{O} \exp(-\beta \vec{H})) / \tr(\exp(-\beta \vec{H}))  \).

Another important quantity is the partition function
\[
Z(\beta) := \tr(\exp(-\beta \vec{H})),
\]
which encodes many thermodynamic quantities \cite{schnack_richter_steinigeweg_20,schulter_gayk_schmidt_honecker_schnack_21}.
In \cref{fig:spin_partition} we show the error of \cref{alg:protoalg} for $\circ = \mathrm{g}$ and $\circ = \mathrm{a}$ with $\mu = \mu_{\lmin,\lmax}^T$.
In particular, we plot the error for $\circ = \mathrm{g}$ over a range of $\beta$ for various choices of $s$ and $m$.
Here we normalize the spectrum of $\vec{H}$ so that $\lmin=0$ by adding a multiple of the identity. 
This amounts to scaling the partition functions by a constant. 

As guaranteed by \cref{thm:unif_anl} (see also \cref{thm:exp_unif}), the Gaussian quadrature method is simultaneously accurate for an infinite number of different functions.
We are unaware of any past work which could guarantee this.
As expected, increasing $m$ and $s$ decreases the error. 
While we do not have theoretical guarantees, we make a similar qualitative observation for $\circ = \mathrm{a}$, although the error seems to be larger than $\circ = \mathrm{g}$, especially at large $\beta$.

\section{Outlook}

In this paper, we provided a unified analysis of a general class of algorithms for spectrum and spectral sum approximation. 
Our bounds are based on separately analyzing the approximation of the CESM $\Phi$ by a random weighted CESM $\Psi$, and then approximating $\Psi$ by a polynomial quadrature rule. 
Our bounds are mostly stated in terms of the best approximation on a given interval $[\lmin,\lmax]$. 
However, it is often the case that Lanczos-based methods significantly outperform such bounds. 
Thus, a study of spectrum dependent/a posteriori bounds which are suitable for use as stopping criteria would be of practical interest.
We also provided a variety of illustrative numerical experiments which highlight some of the peculiarities of matrix free quadrature.
We believe there is value in further study of how to choose parameters for the \cref{alg:protoalg} based on application specific knowledge.

\printbibliography


\end{document}


\maketitle

\begin{abstract}
We provide an overview of the relevant matrix-free quadrature algorithms and discuss potential practical implementations. 
In particular, we show that parameters used by the kernel polynomial method can be selected after the main computation has occurred.
The algorithms computational properties and behavior in finite precision arithmetic are also discussed and compared.
\end{abstract}

\section{Introduction}

The algorithms studied in the main paper were only described mathematically.
In this supplement, we discuss some potential implementations which provides further insight into the connections between the algorithms. 
In particular, we discuss the behavior of SLQ in finite precision arithmetic, and argue that a loss of orthogonality in the Lanczos algorithm does not cause SLQ to stop converging.
We also provide additional numerical experiments.

\subsection{Notation}

We use \emph{zero-indexed numpy style slicing} to indicate entries of matrices and vectors.
Thus, the submatrix consisting of rows \( r \) through \( r'-1 \) and columns \( c \) through \( c'-1 \) by \( [\vec{B}]_{r:r',c:c'} \). 
If any of these indices are equal to \( 0 \) or \( n \), they may be omitted and if \( r' = r+1  \) or \( c' = c+1 \), then we will simply write \( r \) or \( c \).
Thus, \( [\vec{B}]_{:,:2} \) denotes the first two columns of \( \vec{B} \), and \( [\vec{B}]_{3,:} \) denotes the fourth row (corresponding to index 3) of \( \vec{B} \).

\subsection{Orthogonal polynomials}

In this section, we provide only the needed background on orthogonal polynomials. 
For a more thorough treatment, see \cite{szego_39,gautschi_04}.

Throughout this paper \( \mu \) will be a non-negative probability distribution function. 
Associated with \( \mu \) is the inner product \( \langle \cdot, \cdot \rangle_\mu \) defined by
\begin{equation}
    \langle f, g\rangle_\mu := \int fg \d\mu.
\end{equation}
If we apply Gram--Schmidt sequentially to the monomial basis \( \{ x^i \}_{i=0}^{\infty} \), as long as the monomials are integrable, we obtain a sequence \( \{ p_i \}_{i=0}^{\infty} \) of polynomials with \( \deg(p_i) = i \) which are orthonormal with respect to the inner product \( \langle \cdot ,\cdot \rangle_\mu \); i.e. \( \langle p_i, p_j \rangle_\mu = \bOne(i=j) \) and can be chosen to have positive leading coefficient. 
The polynomials \( \{ p_i \}_{i=0}^{\infty} \) are called the (normalized) orthogonal polynomials with respect to \( \mu \), and we denote the roots of \( p_{i} \) by \( \{ \theta_{j}^{(i)} \}_{j=1}^{i} \).

\begin{definition}
    The modified moments of \( \Psi \) with respect to \( \mu \) are 
    \begin{equation}
        m_i := \int p_i \d\Psi
        ,\qquad
        i=0,1,\ldots
    \end{equation}
    If \( m_0, \ldots, m_{s} < \infty \), we say \( \Psi \) has finite moments through degree \( s \).
\end{definition}
Note that for any \( i \geq 0 \) the modified moment \( m_i \) for \( \Psi \) (with respect to \( \mu \)) can be written as the quadratic form
\begin{equation}
    m_i = \int p_i \d \Psi
    = \vec{v}^\cT p_i(\vec{A}) \vec{v}
\end{equation}
and can therefore be computed from the information in \( \mathcal{K}_s(\vec{A},\vec{v}) \) for any \( s \geq i/2 \).

It is well known that the orthogonal polynomials with respect to \( \mu \) satisfy a three term recurrence
\begin{equation}
    \label{sup:eqn:poly_three_term}
    x p_i(x) =  \beta_{i-1} p_{i-1}(x) + \alpha_i p_i(x) + \beta_{i} p_{i+1}(x)
    ,\qquad i =0,1,\ldots
\end{equation}
with initial conditions \( p_0(x) = 1 \), \( p_{-1}(x) = 0 \), and \( \beta_{-1} = 0 \) for some coefficients \( \{ \alpha_i \}_{i=0}^{\infty} \) and \( \{ \beta_i \}_{i=0}^{\infty} \) chosen to ensure orthonormality.
Typically, these coefficients are placed in a, possibly semi-infinite, tridiagonal matrix \( \vec{M} \) called the Jacobi matrix corresponding to \( \mu \); i.e.,
\begin{equation}
    \vec{M} := 
    \operatorname{tridiag}
    \left(\hspace{-.75em} \begin{array}{c}
        \begin{array}{cccc} \beta_0 & \beta_1 & \cdots & \end{array} \\
        \begin{array}{ccccc} \alpha_0 & \alpha_1 & \alpha_2 & \cdots & \end{array} \\
        \begin{array}{cccc} \beta_0 & \beta_1 & \cdots &  \end{array} 
    \end{array} \hspace{-.75em}\right).
\end{equation}
For a distribution \( \nu \), different from \( \mu \), we will denote the corresponding Jacobi matrix by \( \vec{M}(\nu) \). 

There is a one-to-one correspondence between the upper-leftmost \( k\times k \) submatrix of a Jacobi matrix and the moments through degree \( 2k-1 \) of the associated distribution function.
In fact, up to a constant, \( p_i \) is the characteristic polynomial of the upper-leftmost \( i\times i \) principal submatrix \( [\vec{M}]_{:i,:i} \) of \( \vec{M} \).
Recalling that we have denoted the zeros of \( p_{s+1} \) by \( \{ \theta_j^{(s+1)} \}_{j=1}^{s+1} \), we observe that the \( \{ \theta_j^{(s+1)} \}_{j=1}^{s+1} \) are distinct and, as a direct consequence of the three term recurrence \cref{sup:eqn:poly_three_term}, 
\begin{equation}
    \label{sup:eqn:jacobi_evec}
    \big[ p_0(\theta_{i}^{(s+1)}), p_1(\theta_{i}^{(s+1)}), \ldots, p_{s}(\theta_{i}^{(s+1)}) \big]^\cT
\end{equation}
is an eigenvector of \( [\vec{M}]_{:s+1,:s+1} \) with eigenvalue \( \theta_{i}^{(s+1)} \) \cite{golub_meurant_09}.

Owing to the deep connection between Chebyshev polynomials and approximation theory \cite{trefethen_19}, one particularly important choice of \( \mu \) is the distribution function corresponding to the Chebyshev polynomials of the first kind. 
We will often treat this case with special care.\footnote{While we treat the case of the Chebyshev polynomials of the first kind, similar results should be expected for other classical orthogonal polynomials.}
\begin{definition}
    The Chebyshev distribution function of the first kind, \( \mu_{a,b}^T : [a,b] \to [0,1] \), is
    \begin{equation}
        \mu_{a,b}^T(x) := \int_{a}^{x}\frac{2}{\pi(b-a)} \frac{1}{\sqrt{1-\big(\frac{2}{b-a}t - \frac{b+a}{b-a}\big)^2}} \d{t}
        = \frac{1}{2} + \frac{1}{\pi}\arcsin\left(\frac{2}{b-a}x - \frac{b+a}{b-a}\right).
    \end{equation}
\end{definition}

\begin{definition}
    The Chebyshev polynomials of the first kind, denoted \( \{ T_i \}_{i=0}^{\infty} \), are defined by the recurrence \( T_0(x) = 1 \), \( T_1(x) = x \), and
    \begin{equation}
        T_{i+1}(x) := 2 x T_i(x) - T_{i-1}(x)
        ,\qquad i=2,3,\ldots.
    \end{equation}
\end{definition}

It can be verified that the orthogonal polynomials \( \{p_i\}_{i=0}^{\infty} \) with respect to \( \mu_{a,b}^T \) are given by \( p_0(x) = T_0(x) = 1 \) and 
    \begin{equation}
        p_i(x) = \sqrt{2} T_i \left( \frac{2}{b-a} x - \frac{b+a}{b-a} \right)
        ,\qquad i=1,2,\ldots.
    \end{equation}
    Therefore, the Jacobi matrix \( \vec{M}( \mu_{a,b}^T ) \) is 
    \begin{equation}
    \vec{M}(\mu_{a,b}^T) =
        \operatorname{tridiag}
        \left(\hspace{-.75em} \begin{array}{c}
        \begin{array}{cccc} \frac{1}{2\sqrt{2}}(b-a) & \frac{1}{4}(b-a) & \cdots & \end{array} \\
        \begin{array}{ccccc} \frac{1}{2}(a+b) & \frac{1}{2}(a+b) & \frac{1}{2}(a+b) & \cdots & \end{array} \\
        \begin{array}{cccc} \frac{1}{2\sqrt{2}}(b-a) & \frac{1}{4}(b-a) & \cdots &  \end{array} 
    \end{array} \hspace{-.75em}\right).
    \end{equation}

\section{Extracting moments from a Krylov subspace}
\label{sup:sec:krylov_moments}

The main computational cost of the prototypical algorithm discussed in this paper is generating the Krylov subspaces \( \{ \mathcal{K}_k(\vec{A},\vec{v}_{\ell}) \}_{\ell=1}^{\nv} \) and extracting the modified moments of each \( \Psi_\ell \) with respect to \( \mu \).
We now describe several approaches for this task in this section.
Specifically, we describe the use of an explicit polynomial recurrence in \cref{sup:sec:moments_direct}, the Lanczos algorithm in \cref{sup:sec:lanczos}, and a general approach using connection coefficients in \cref{sup:sec:connection_coeffs}. 
In classical approximation theory, quadrature rules are most commonly used to approximate integrals against some simple weight function; for instance, a weight function for some family of classical orthogonal polynomials. 
As a result, it is often the case that the modified moments can be computed very efficiently \cite{townsend_webb_olver_17}.
In contrast, the weighted CESM depends on the eigenvalues of \( \vec{A} \) which are unknown and may be distributed non-uniformly.

\begin{remark}
Since we have defined \( \Psi \) and \( \vec{v} \) as random variables, the outputs of the algorithms from this section are also random variables. 
The algorithms we describe are deterministic when conditioned on the sample of \( \vec{v} \) (and therefore \( \Psi \)), and any statement which involves \( \Psi \) or \( \vec{v} \) not enclosed in \( \PP[ \: \cdot \:] \) should be interpreted to hold with probability one. 
In particular, such statements will hold for any sample of \( \vec{v} \) which has nonzero projection onto each eigenvector of \( \vec{A} \).
\end{remark}

\subsection{Computing modified moments directly}
\label{sup:sec:moments_direct}

Perhaps the most obvious approach to computing modified moments is to construct the basis \( [ p_0(\vec{A})\vec{v}, \ldots, p_{k}(\vec{A})\vec{v}] \) for \( \mathcal{K}_{k}(\vec{A},\vec{v}) \) and then compute 
\begin{equation}
    [m_0, m_1, \ldots, m_k] = 
    \vec{v}^\cT \big[ p_0(\vec{A})\vec{v}, \ldots, p_{k}(\vec{A})\vec{v}\big].
\end{equation}
This can be done using \( k \) matrix-vector products and \( O(n) \) storage using the matrix recurrence version of \cref{sup:eqn:poly_three_term}.
Indeed, we have that
\begin{equation}
    \vec{A} p_i(\vec{A}) \vec{v} =  \beta_{i-1} p_{i-1}(\vec{A})\vec{v} + \alpha_i p_i(\vec{A})\vec{v} + \beta_{i} p_{i+1}(\vec{A})\vec{v}
\end{equation}
from which we can implement an efficient matrix-free algorithm to compute the modified moments \( \{ m_i \}_{i=0}^{k} \), as shown in \cref{alg:moments}.

\begin{labelalgorithm}[H]{moments}{modified-moments}{Get modified moments wrt. \( \mu \) of weighed CESM}
\begin{algorithmic}[1]
    \Procedure{\thealgorithmname}{$\vec{A}, \vec{v}, k, \mu$}
    \State \( \vec{q}_0 = \vec{v} \), \( m_0 = \vec{v}^\cT\vec{v} \), \( \vec{q}_{-1} = \vec{0} \), \( \beta_{-1} = 0 \)
    \For {\( i=0,1,\ldots, k-1 \)}
        \State \( \vec{q}_{i+1} = \frac{1}{\beta_{i}} \left( \vec{A} \vec{q}_i - \alpha_i \vec{q}_i - \beta_{i-1} \vec{q}_{i-1} \right) \)
        \State \( m_{i+1}  = \vec{v}^\cT \vec{q}_{i+1} \)
    \EndFor
    \State \Return \( \{ m_i \}_{i=0}^{k} \)
\EndProcedure
\end{algorithmic}
\end{labelalgorithm}

If we instead compute 
\begin{equation}
    \big[ p_0(\vec{A})\vec{v}, \ldots, p_{k}(\vec{A})\vec{v}\big]^\cT
    \big[ p_0(\vec{A})\vec{v}, \ldots, p_{k}(\vec{A})\vec{v}\big],
\end{equation}
then we have the information required to compute the modified moments though degree \( 2k \).
However, it is not immediately clear how to do this without the \( O(kn) \) memory required to store a basis for \( \mathcal{K}_k(\vec{A},\vec{v}) \).
It turns out it is indeed generally possible to compute these moments without storing the entire basis, and we discuss a principled approach for doing so using connection coefficients in \cref{sup:sec:connection_coeffs}.

One case where a low-memory method for extracting the moments to degree \( 2k \) is straightforward is when \( \mu = \mu_{a,b}^T \).
This is because, for all \( i \geq 0 \), the Chebyshev polynomials satisfy the identities
\begin{equation}
    T_{2i}(x) = 2 (T_i(x))^2 - 1 
    ,\qquad
    T_{2i+1}(x) = 2 T_i(x) T_{i+1}(x) - x. 
\end{equation}
Thus, using the recurrence for the Chebyshev polynomials and their relation to the orthogonal polynomials with respect to \( \mu_{a,b}^T \), we obtain \cref{alg:cheb_moments}.
This algorithm is well-known in papers on the kernel polynomial method are variants; see for instance \cite{skilling_89,silver_roder_94,weisse_wellein_alvermann_fehske_06,hallman_21}.
 
\begin{remark}   
The Chebyshev polynomials grow rapidly outside the interval \( [-1,1] \).
Therefore, if the spectrum of \( \vec{A} \) extends beyond this interval, then computing the Chebyshev polynomials in \( \vec{A} \) may suffer from numerical instabilities.
Instead, the distribution function \( \mu_{a,b}^T \) and corresponding orthogonal polynomials should be used for some choice of \( a \) and \( b \) with \( [\lmax,\lmax] \subset [a,b] \).

\end{remark}

\begin{labelalgorithm}[H]{cheb_moments}{Chebyshev-moments}{Get modified (Chebyshev) moments of weighed CESM}
\begin{algorithmic}[1]
    \Procedure{\thealgorithmname}{$\vec{A}, \vec{v}, k, a,b$}
    \State \( \vec{q}_0 = \vec{v} \), \( m_0 = (\vec{q}_0)^\cT \vec{q}_0 \)
    \State \( \vec{q}_1 = \frac{2}{b-a}(\vec{A} \vec{q}_0 - \frac{a+b}{2} \vec{q}_0) \), \( m_1 =  \sqrt{2} (\vec{q}_0)^\cT \vec{q}_1 \)
    \For {\( i=1,2,\ldots, k-1 \)}
        \State \( m_{2i}  = \sqrt{2} (2(\vec{q}_i)^\cT \vec{q}_{i} - m_0) \)
        \State \( \vec{q}_{i+1} = 2 \frac{2}{b-a} (\vec{A} \vec{q}_i - \frac{a+b}{2} \vec{q}_i) - \vec{q}_{i-1} \)
        \State \( m_{2i+1}  = \sqrt{2} (2(\vec{q}_i)^\cT \vec{q}_{i+1}) - m_1 \)
    \EndFor
    \State \( m_{2k} = \sqrt{2}( 2 (\vec{q}_k)^\cT\vec{q}_k - m_0) \)
    \State \Return \( \{ m_i \}_{i=0}^{2k} \)
\EndProcedure
\end{algorithmic}
\end{labelalgorithm}

\subsection{Lanczos algorithm}
\label{sup:sec:lanczos}

Thus far, we have set \( \mu \) to be some known distribution such that the Jacobi matrix \( \vec{M} \) and the corresponding orthogonal polynomials are known.
In some situations, it will be useful to set \( \mu = \Psi \). 
However, in this case, the orthogonal polynomial recurrence itself is unknown and must be computed. 
This can be done by the Stieltjes procedure, which is mathematically equivalent to the well known Lanczos algorithm run on \( \vec{A} \) and \( \vec{v} \).
The Lanczos algorithm is shown in \cref{alg:lanczos}.
When run for \( k \) matrix vector products, Lanczos computes \( [ \alpha_0, \alpha_1, \ldots, \alpha_{k-1} ] \) and \( [\beta_0, \beta_1, \ldots, \beta_{k-1}] \), the coefficients for the diagonal and off diagonal of the Jacobi matrix $\vec{M}(\Psi)$.
We will denote this special Jacobi matrix by $\vec{T}$.

\begin{labelalgorithm}[H]{lanczos}{Lanczos}{Lanczos algorithm}
\begin{algorithmic}[1]
    \Procedure{\thealgorithmname}{$\vec{A}, \vec{v}, k$}
    \State \( \vec{q}_0 = \vec{v} / \|\vec{v}\| \), \( \beta_{-1} = 0 \), \( \vec{q}_{-1} = \vec{0} \)
    \For {\( i=0,1,\ldots, k-1 \)}
        \State \( \tilde{\vec{q}}_{i+1} = \vec{A} \vec{q}_{i} - \beta_{i-1} \vec{q}_{i-1} \)
        \State \( \alpha_{i} = \langle \tilde{\vec{q}}_{i+1}, \vec{q}_i \rangle \)
        \State \( \hat{\vec{q}}_{i+1} = \tilde{\vec{q}}_{i+1} - \alpha_{i} \vec{q}_i \)
        \State optionally, reorthogonalize,  \( \hat{\vec{q}}_{i+1} \) against \( \{\vec{q}_j\}_{j=0}^{i-1} \)
        \State \( \beta_{i} = \| \hat{\vec{q}}_{i+1} \| \)
        \State \( \vec{q}_{i+1} = \hat{\vec{q}}_{i+1} / \beta_{i} \)
    \EndFor
    \State \Return \( [\vec{T}]_{:k,:k} \) or \( [\vec{T}]_{:k+1,:k} \) 
\EndProcedure
\end{algorithmic}
\end{labelalgorithm}

\subsection{Connection coefficients to compute more modified moments}
\label{sup:sec:connection_coeffs}

We now discuss how to use connection coefficients to compute the modified moments of \( \Psi \) with respect to \( \mu \) given knowledge of either
(i) the modified moments of \( \Psi \) with respect to some distribution \( \nu \) or 
(ii) the tridiagonal matrix computed using the Lanczos algorithm (\cref{alg:lanczos}).
Much of our discussion on connection coefficients is based on \cite{webb_olver_21}; see also \cite{fischer_golub_91}.

\begin{definition}
    \label[definition]{def:connection_coeffs}
    The connection coefficient matrix \( \vec{C} = \vec{C}_{\mu\to\nu} \) is the upper triangular matrix representing a change of basis between the orthogonal polynomials \( \{ p_i \}_{i=1}^{\infty} \) with respect to \( \mu \) and the orthogonal polynomials \( \{ q_i \}_{i=1}^{\infty} \) with respect to \( \nu \), whose entries satisfy,
    \begin{equation}
        p_s = [\vec{C}]_{0,s} q_0 +  [\vec{C}]_{1,s} q_1 +  \cdots + [\vec{C}]_{s,s} q_s.
    \end{equation}
\end{definition}

\Cref{def:connection_coeffs} implies that
\begin{equation}
    m_i =
    \int p_i \d\Psi
    = 
    \sum_{j=0}^{i} [\vec{C}]_{j,i} \int q_j \d\Psi
    =
    \sum_{j=0}^{i} [\vec{C}]_{j,i} n_j
    ,\qquad i=0,1,\ldots, s
\end{equation}
where \( \{n_i\}_{i=0}^{s} \) are the modified moments of \( \Psi \) with respect to \( \nu \).
Thus, we can easily obtain the modified moments \( \{ m_i \}_{i=0}^{s} \) of \( \Psi \) with respect to \( \mu \) from the modified moments of \( \Psi \) with respect to \( \nu \).
In particular, if \( \vec{m} \) and \( \vec{n} \) denote the vectors of modified moments of $\mu$ and $\nu$ respectively, then \( \vec{m} = \vec{C}^\T \vec{n} \).

Moreover, in the special case that \( \nu \) has the same moments as \( \Psi \) through degree \( s \), 
\begin{equation}
    n_j = \int q_j \d\Psi 
    = \int q_j \d\nu
    = \int q_0 q_j \d\nu 
    = \bOne(j = 0).
\end{equation}
Therefore, the modified moments of \( \Psi \) (with respect to \( \mu \)) through degree \( s \) can be computed by
\begin{equation}
    m_i=
    \int p_i \d\Psi = 
    [ \vec{C}]_{0,i}
    ,\qquad i =0,1,\ldots, s.
\end{equation}

In order to use the above expressions, we must compute the connection coefficient matrix.
\Cref{def:connection_coeffs} implies that for all \( i\leq j \), the entries of the connection coefficient matrix are given by
\begin{equation}
    [\vec{C}]_{i,j} = \int q_i p_j \d\nu.
\end{equation}    
Unsurprisingly, then, the entries of the connection coefficient matrix \( \vec{C} = \vec{C}_{\mu\to\nu} \) can be obtained by a recurrence relation.
\begin{proposition}[{\cite[Corollary 3.3]{webb_olver_21}}]
\label[proposition]{thm:connection_coefficient_recurrence}
    Suppose the Jacobi matrices for \( \mu \) and \( \nu \) are respectively given by
    \begin{equation}
        \vec{M}(\mu) =
            \operatorname{tridiag}
    \left(\hspace{-.75em} \begin{array}{c}
        \begin{array}{cccc} \beta_0 & \beta_1 & \cdots & \end{array} \\
        \begin{array}{ccccc} \alpha_0 & \alpha_1 & \alpha_2 & \cdots & \end{array} \\
        \begin{array}{cccc} \beta_0 & \beta_1 & \cdots &  \end{array} 
    \end{array} \hspace{-.75em}\right)
    ,\qquad
        \vec{M}(\nu) =
        \operatorname{tridiag}
    \left(\hspace{-.75em} \begin{array}{c}
        \begin{array}{cccc} \delta_0 & \delta_1 & \cdots & \end{array} \\
        \begin{array}{ccccc} \gamma_0 & \gamma_1 & \gamma_2 & \cdots & \end{array} \\
        \begin{array}{cccc} \delta_0 & \delta_1 & \cdots &  \end{array} 
    \end{array} \hspace{-.75em}\right).
    \end{equation}

    Then the entries of \( \vec{C} = \vec{C}_{\mu\to\nu} \) satisfy, for \( i,j \geq 0 \), the following recurrence:
\begin{align*}
    [\vec{C}]_{0,0} &= 1 
    \\
    [\vec{C}]_{0,1} &= (\gamma_0 - \alpha_0)/\beta_0
    \\
    [\vec{C}]_{1,1} &= \delta_0 / \beta_0 
    \\
    [\vec{C}]_{0,j} &= ((\gamma_0 - \alpha_{j-1}) [\vec{C}]_{0,j-1} + \delta_0 [\vec{C}]_{2,j-1} - \beta_{j-2} [\vec{C}]_{0,j-2} ) / \beta_{j-1}
    \\
    [\vec{C}]_{i,j} &= (\delta_{i-1} [\vec{C}]_{i-1,j-1} + (\gamma_i - \alpha_{j-1}) [\vec{C}]_{i,j-1} + \delta_i[\vec{C}]_{i+1,j-1} - \beta_{j-2} [ \vec{C}]_{i,j-2}) / \beta_{j-1}.
\end{align*}
\end{proposition}

\Cref{thm:connection_coefficient_recurrence} yields a natural algorithm for computing the connection coefficient matrix \( \vec{C}_{\mu\to\nu} \).
This algorithm is shown as \cref{alg:connection_coeffs}.
Note that \( \vec{C} \) is, by definition, upper triangular, so \( [\vec{C}]_{i,j}  = 0 \) whenever \( i > j \).
We remark that for certain cases, particularly transforms between the Jacobi matrices of classical orthogonal polynomials, faster algorithms are known \cite{townsend_webb_olver_17}.
We do not focus on such details in this paper, as the cost of products with \( \vec{A} \) is typically far larger than the cost of computing \( \vec{C}_{\mu\to\nu} \).

\begin{labelalgorithm}[H]{connection_coeffs}{connection-coeffs}{Get connection coefficients}
\begin{algorithmic}[1]
    \Procedure{\thealgorithmname}{$\mu,k_{m_1},k_{m_2},\nu,k_{n_1},k_{n_2}$}
    \State \( [\vec{C}]_{0,0} = 1 \) 
    \For {$j = 1,\ldots, \min(k_{m_2},k_{n_1}+k_{n_2})$}
    \For {$i=0,\ldots, \min(j,k_{n_1}+k_{n_2}-j)$}
    \State 
    \(
    \begin{aligned}
        [\vec{C}]_{i,j} = (&\delta_{i-1} [\vec{C}]_{i-1,j-1} + (\gamma_i - \alpha_{j-1}) [\vec{C}]_{i,j-1} 
        \\&+ \delta_i[\vec{C}]_{i+1,j-1} 
        - \beta_{j-2} [ \vec{C}]_{i,j-2}) / \beta_{j-1} 
    \end{aligned}
    \)
    \Comment{ \( \begin{aligned}\text{if \( i' > j' \) or \( j' = -1 \), drop} \\ \text{the term involving \( [\vec{C}]_{i',j'} \)} \end{aligned}\)}
    \EndFor
    \EndFor
    \State \Return \( \vec{C} = \vec{C}_{\mu\to\nu} \)
\EndProcedure
\end{algorithmic}
\end{labelalgorithm}

\begin{remark}
    From \cref{thm:connection_coefficient_recurrence} it is not hard to see that \( [\vec{C}]_{:k,:k} \) can be computed using \( [\vec{M}(\nu)]_{:k,:k} \) and \( [\vec{M}(\mu)]_{:k,:k} \). 
    Moreover, \( [\vec{C}]_{0,:2k+1} \) can be computed using \( [\vec{M}(\nu)]_{:k+1,:k} \) and \( [\vec{M}(\mu)]_{:2k,:2k} \). 
    In general \( \vec{M}(\mu) \) will be known fully, and in such cases, the modified moments through degree \( 2k \) can be computed using the information generated by Lanczos run for \( k \) iterations.
\end{remark}

We can use \cref{alg:connection_coeffs} in conjunction with \cref{alg:cheb_moments} and \cref{alg:lanczos} to compute modified moments with respect to \( \mu \).
This is shown in \cref{alg:moments_cheb} and \cref{alg:moments_lanc} respectively.
\begin{labelalgorithm}[H]{moments_cheb}{moments-from-Cheb}{Get modified moments wrt. \( \mu \) of weighed CESM (via Chebyshev moments)}
\begin{algorithmic}[1]
    \Procedure{\thealgorithmname}{$\vec{A}, \vec{v}, s, \mu, a, b$}
    \State \( k = \lceil s/2 \rceil \)
    \State \( \{ n_i \}_{i=0}^{2k} = \ref{alg:name:cheb_moments}(\vec{A},\vec{v}, k,a,b)  \)
    \State \( \vec{C} = \ref{alg:name:connection_coeffs}([\vec{M}(\mu)]_{:2k,:2k},[\vec{M}(\mu_{a,b}^T)]_{:2k,:2k}) \)
    \For{\( i=0,1,\ldots, s \)}
        \State \( m_i = \sum_{j=0}^{i} [\vec{C}]_{j,i} n_j \)
    \EndFor
    \State \Return \( \{ m_i \}_{i=0}^{s} \)
\EndProcedure
\end{algorithmic}
\end{labelalgorithm}

\begin{labelalgorithm}[H]{moments_lanc}{moments-from-Lanczos}{Get modified moments wrt. \( \mu \) of weighed CESM (via Lanczos)}
\begin{algorithmic}[1]
    \Procedure{\thealgorithmname}{$\vec{A}, \vec{v}, s, \mu$}
    \State \( k = \lceil s/2 \rceil \)
    \State \( [\vec{T}]_{i:k+1,:k} = \ref{alg:name:lanczos}(\vec{A},\vec{v}, k)  \)
    \State \( \vec{C} = \ref{alg:name:connection_coeffs}([\vec{M}(\mu)]_{:2k,:2k},[\vec{T}]_{:k+1,:k}) \)
    \For{\( i=0,1,\ldots, s \)}
        \State \( m_i = [\vec{C}]_{0,i} \)
    \EndFor
    \State \Return \( \{ m_i \}_{i=0}^{s} \)
\EndProcedure
\end{algorithmic}
\end{labelalgorithm}

We conclude this section by noting that, given the modified moments of \( \Psi \) with respect to \( \mu \), the tridiagonal matrix produced by \cref{alg:lanczos} can itself be obtained \cite{gautschi_70,sack_donovan_71,wheeler_blumstein_72,gautschi_86}.
Special care must be taken in finite precision arithmetic as the maps from moments to quadrature nodes and weights or to the tridiagonal Jacobi matrix may be severely ill-conditioned \cite{gautschi_04,oleary_strakos_tichy_07}.

\section{Quadrature approximations for weighted spectral measures}
\label{sup:sec:quadrature}

We can use the information extracted by the algorithms in the previous section to obtain quadrature rules for the weighted CESM \( \Psi \).
We begin with a discussion on quadrature by interpolation in \cref{sup:sec:iq} followed by a discussion on Guassian quadrature (SLQ) in \cref{sup:sec:gq} and quadrature by approximation (KPM) in \cref{sup:sec:aq}.
Finally, in \cref{sup:sec:damped}, we describe how damping can be used to ensure the positivity of quadrature approximations.

\subsection{Quadrature by interpolation}
\label{sup:sec:iq}

Our first class of quadrature approximations for \( \Psi \) is the degree \( s \) quadrature by interpolation \( \qq[i]{\Psi}{s} \) (i.e. $\circ = \textrm{i}$) which is defined by the relation
\begin{equation}
    \label{sup:eqn:interp_f}
    \int f \d\qq[i]{\Psi}{s}
    := \int \ff[i]{f}{s} \d\Psi,
\end{equation}
where \( \ff[i]{f}{s} \) is the degree \( s \) polynomial interpolating a function \( f \) at the zeros \( \{\theta_{j}^{(s+1)}\}_{j=1}^{s+1} \) of \( p_{s+1} \), the degree $s+1$ orthogonal polynomial with respect to \( \mu \).
It's not hard to see that \cref{sup:eqn:interp_f} implies
\begin{equation}
    \qq[i]{\Psi}{s}= \sum_{j=1}^{s+1} \omega_{j}^{} \bOne\big[\theta_{j}^{(s+1)} \leq x \big],
\end{equation}
where the weights \( \{ \omega_{j}^{} \}_{j=1}^{s+1} \) are chosen such that the moments of \( \qq[i]{\Psi}{s} \) agree with those of \( \Psi \) through degree \( s \).

One approach to doing this is by solving the Vandermonde-like linear system of equations
%
\begin{equation}
    \label{sup:eqn:vandermonde}
    \vec{P}\bm{\omega} = \vec{m} 
    ,\qquad
    [\vec{P}]_{i,j} = p_i(\theta_{j}^{(s+1)})
    ,\qquad [\vec{m}]_i = \int p_i \d\Psi.
\end{equation}
This will ensure that polynomials of degree up to \( s \) are integrated exactly. 

While it is not necessary to restrict the test polynomials to be the orthogonal polynomials \( \{ p_i \}_{i=1}^{\infty} \) with respect to \( \mu \) nor the interpolation nodes to be the zeros \( \{ \theta_j^{(s+1)} \}_{j=1}^{s+1} \) of \( p_{s+1} \), doing so has several advantages.
If arbitrary polynomials are used, the matrix \( \vec{P} \) may be exponentially ill-conditioned; i.e. the condition number of the matrix could grow exponentially in \( k \).
This can cause numerical issues with solving \( \vec{P} \bm{\omega} = \vec{m} \).
If orthogonal polynomials are used, then as in \cref{sup:eqn:jacobi_evec} we see that the columns of \( \vec{P} \) are eigenvectors of the Jacobi matrix \( \vec{M} \).
Since \( \vec{M} \) is symmetric, this implies that \( \vec{P} \) has orthogonal columns; i.e. \( \vec{P}^\cT\vec{P} \) is diagonal.
Therefore, we can easily apply \( \vec{P}^{-1} \) through a product with \( \vec{P}^\cT \) and an appropriately chosen diagonal matrix.
In particular, if \( \vec{S} \) is the orthonormal matrix of eigenvectors, then \( \vec{P} = \vec{S} \operatorname{diag}([\vec{S}]_{0,:})^{-1} \).
From this, we see that \( \vec{P}^{-1} = \operatorname{diag}([\vec{S}]_{0,:})\vec{S}^\cT \).
This yields \cref{alg:iq} which, to the best of our knowledge, is not widely known.

\begin{labelalgorithm}[H]{iq}{quad-by-interp}{Quadrature by interpolation}
\begin{algorithmic}[1]
    \Procedure{\thealgorithmname}{$\{m_i\}_{i=0}^{s},\mu$}
    \State \( \bm{\theta},\vec{S} = \Call{eig}{[\vec{M}(\mu)]_{:s+1,:s+1}} \) \Comment{Eigenvectors normalized to unit length}
    \State \( \bm{\omega} = \operatorname{diag}([\vec{S}]_{0,:})\vec{S}^\cT \vec{m} \) 
    \State \Return \( \qq[i]{\Psi}{s} = \left( x\mapsto \sum_{j=1}^{s+1} \omega_{j}^{} \bOne(\theta_{j}\leq x) \right) \)
\EndProcedure
\end{algorithmic}
\end{labelalgorithm}

\begin{remark}   
In certain cases, such as \( \mu = \mu_{a,b}^T \), \( \vec{P}^{-1} \) can be applied quickly and stably using fast transforms, such as the discrete cosine transform, without ever constructing \( \vec{P} \).
\end{remark}

\subsection{Gaussian quadrature and stochastic Lanczos quadrature}
\label{sup:sec:gq}

While interpolation-based quadrature rules supported on \( k \) nodes do not, in general, integrate polynomials of degree higher than \( k-1 \) exactly, if we allow the nodes to be chosen adaptively we can do better.
The degree \( 2k-1 \) Gaussian quadrature rule \( \qq[g]{\Psi}{2k-1} \) for \( \Psi \) is obtained by constructing a quadrature rule by interpolation rule at the roots \( \{ \theta_{i}^{(k)} \}_{i=1}^{k} \) of the degree \( k \) orthogonal polynomial \( p_{k} \) of \( \Psi \) (i.e. by taking \( \mu = \Psi \)).
The Gaussian quadrature rule integrates polynomials of degree \( 2k-1 \) exactly; see \cite{golub_meurant_09}.

Because the polynomials \( \{ p_i \}_{i=0}^{\infty} \) are orthogonal with respect to the probability distribution \( \Psi \) function,
we have that
\begin{equation}
    m_i = \vec{v}^\cT p_i(\vec{A})\vec{v} 
    =
    \int p_i p_0 \d\Psi
    = \bOne(i=0).
\end{equation}
This means the right hand side \( \vec{m} \) of \cref{sup:eqn:vandermonde} is the first canonical unit vector \( \vec{e}_0 = [1,0,\ldots, 0]^\cT \).
Thus, as in \cref{alg:iq}, \( \vec{\omega} = \operatorname{diag}([\vec{S}]_{0,:}) [\vec{S}]_{0,:}\); that is, the quadrature weights are the squares of the first components of the unit length eigenvectors of \( [\vec{T}]_{:k,:k} \).
We then arrive at \cref{alg:gq} for obtaining a Gaussian quadrature rule for \( \Psi \) from the tridiagonal matrix \( [\vec{T}]_{:k,:k} \) generated by \cref{alg:lanczos}.
When used with the prototypical algorithm \cref{alg:protoalg}, the resulting algorithm is often called SLQ.

\begin{labelalgorithm}[H]{gq}{Gaussian-quadrature}{Gaussian quadrature}
\begin{algorithmic}[1]
    \Procedure{\thealgorithmname}{$[\vec{T}]_{:k,:k}$}
    \State \( \bm{\theta}, \vec{S} = \textsc{eig}([\vec{T}]_{:k,:k}) \) \Comment{Eigenvectors normalized to unit length}
    \State \( \bm{\omega} = \operatorname{diag}([\vec{S}]_{0,:}) [\vec{S}]_{0,:} \)
    \State \Return \( \qq[g]{\Psi}{2k-1} = \left( x\mapsto \sum_{j=1}^{k} \omega_j \bOne(\theta_j\leq x) \right) \)
\EndProcedure
\end{algorithmic}
\end{labelalgorithm}

\begin{remark}
    To construct a Gaussian quadrature, the three term recurrence for the orthogonal polynomials of \( \Psi \) must be determined. 
    Thus, the main computational cost is computing the tridiagonal matrix defining this recurrence. 
    However, due to orthogonality, all but the degree zero modified moments are zero so we do not need to compute the moments.
    This is in contrast to other schemes where the polynomial recurrence is known but the modified moments must be computed.
\end{remark}

\subsection{Quadrature by approximation}
\label{sup:sec:aq}

Rather than defining a quadrature approximation using an interpolating polynomial, we might instead define an approximation \( \qq[a]{\Psi}{s} \) by the relation
\begin{equation}
    \int f\d\qq[a]{\Psi}{s}
    := \int \ff[a]{f}{s} \d\Psi,
\end{equation}
where \( \ff[a]{f}{s} \) is the projection of \( f \) onto the orthogonal polynomials with respect to \( \mu \) through degree \( s \) in the inner product \( \langle \cdot, \cdot\rangle_\mu \).
That is,
\begin{equation}
    \ff[a]{f}{s}
    := \sum_{i=0}^{s} \langle f, p_i \rangle_\mu \:p_i
    = \sum_{i=0}^{s} \left(\int f p_i\d\mu \right) \:p_i.
\end{equation}
We can obtain an approximation to the density \( \d\Psi/\d{x} \) by using the density \( \d\mu/\d{x} \) and the definition of the Radon--Nikodym derivative: 
\begin{equation}
    \frac{\d\qq[a]{\Psi}{s}}{\d{x}} =  \frac{\d\qq[a]{\Psi}{s}}{\d\mu}  \frac{\d\mu}{\d{x}}
    = \frac{\d\mu}{\d{x}} \sum_{i=0}^{s} m_i p_i.
\end{equation}
Integrating this ``density'' gives the approximation \( \qq[a]{\Psi}{s} \) shown in \cref{alg:aq}. 
A spectrum adaptive Lanczos-based implementation is described in \cite{chen_23}. 

\begin{labelalgorithm}[H]{aq}{quad-by-approx}{Quadrature by approximation}
\begin{algorithmic}[1]
    \Procedure{\thealgorithmname}{$\{ m_i \}_{i=0}^{s}, \mu$}
    \State \Return \( \qq[a]{\Psi}{s} = \left( x\mapsto \sum_{i=0}^{s} m_i \int_{(-\infty,x]} p_i \d\mu \right) \)
\EndProcedure
\end{algorithmic}
\end{labelalgorithm}

\begin{remark}
    When \( \mu = \mu_{a,b}^T \), \( {\d\qq[a]{\Psi}{s}} / {\d\mu} \) can be evaluated quickly at Chebyshev nodes by means of the discrete cosine transform.
    This allows the density \( {\d\qq[a]{\Psi}{s}} / {\d{x}} \) to be evaluated quickly at these points.
\end{remark}

\subsubsection{Evaluating spectral sums and the relation to quadrature by interpolation}

We have written the output of \cref{alg:aq} as a distribution function for consistency.
However, note that 
\begin{equation}
    \int f \d \qq[a]{\Psi}{s}
    = \sum_{i=0}^{s} m_i \int f  p_i \d\mu.
\end{equation}
Thus, if used for the task of spectral sum approximation, the distribution function \( \qq[a]{\Psi}{s} \) need not be computed explicitly. 
Rather, the \( \mu \)-projections of \( f \) onto the orthogonal polynomials with respect to \( \mu \) can be used instead.
In many cases, the values of these projections are known analytically, and even if they are unknown, computing them is a scalar problem independent of the matrix size \( n \).

A natural approach to computing the \( \mu \)-projections of \( f \) numerically is to use a quadrature approximation for \( \mu \).
Specifically, we might use the \( d \)-point Gaussian quadrature rule \( \qq[g]{\mu}{2d-1} \) for \( \mu \) to approximate \( \int f p_i \d\mu \).
This gives us the approximation 
\begin{equation}
    \sum_{i=0}^{s} m_i \int f p_i \d\mu
    \approx \sum_{i=0}^{s} m_i \int f p_i \d \qq[g]{\mu}{2s+1}
    = \sum_{i=0}^{s} m_i \sum_{j=1}^{d} \omega_{i}^{(d)} f(\theta_{j}^{(d)}) p_i(\theta_j^{(d)})
\end{equation}
where \( \omega_i^{(d)} \) are the Gaussian quadrature weights for \( \mu \).

Similar to above, denote by \( \vec{P} \) the \( d\times d \) Vandermonde-like matrix of orthogonal polynomials with respect to \( \mu \) evaluated at the zeros of \( p_{d+1} \).
If \( \vec{S} \) is the orthonormal matrix of eigenvectors of the \( d\times d \) Jacobi matrix \( [\vec{M}]_{:d,:d} \) for \( \mu \), then recall that the Gaussian quadrature weights \( \bm{\omega}^{(d)} \) are given by \( \operatorname{diag}([\vec{S}]_{0,:s+1})([\vec{S}]_{:s+1,:})^\cT \).
This yields \cref{alg:aaq}.
Note that in the case \( d=s+1 \), \cref{alg:aaq} is equivalent to \cref{alg:iq}.
In other words, quadrature by interpolation can be viewed as a discretized version of quadrature by approximation.

\begin{labelalgorithm}[H]{aaq}{approximate-quad-by-approx}{Approximate quadrature by approximation}
\begin{algorithmic}[1]
    \Procedure{\thealgorithmname}{$\{m_i\}_{i=0}^{s},d,\mu$}
    \State \( \bm{\theta},\vec{S} = \Call{eig}{[\vec{M}(\mu)]_{:d,:d}} \) \Comment{Eigenvectors normalized to unit length}
    \State \( \bm{\omega} = \operatorname{diag}([\vec{S}]_{0,:s+1})([\vec{S}]_{:s+1,:})^\cT \vec{m} \) 
    \State \Return \( \qq[i]{\Psi}{s} = \left( x\mapsto \sum_{j=1}^{k} \omega_{j}^{} \bOne(x\leq \theta_{j}) \right) \)
\EndProcedure
\end{algorithmic}
\end{labelalgorithm}

\subsection{Positivity by damping and the kernel polynomial method}
\label{sup:sec:damped}

While it is clear that that the Gaussian quadrature rule \( \qq[g]{\Psi}{s} \) is a non-negative probability distribution function, neither \( \qq[i]{\Psi}{s} \) nor \( \qq[a]{\Psi}{s} \) are guaranteed to be weakly increasing.
We now discuss how to use damping to enforce this property.

Towards this end, define the damping kernel,
\begin{equation}
    P_x(y) 
    = \sum_{i=0}^{s} \rho_{i} p_i(x) p_i(y)
\end{equation}
where \( \{ \rho_i \}_{i=0}^{s} \) are \emph{damping coefficients}.
Next, define the damped interpolant \( \ff[d-i]{f}{s} \) and approximant \( \ff[d-a]{f}{s} \) by
\begin{equation}
    \ff[d-i]{f}{s}(x) = \int P_x f \d\mu
    \qquad \text{and} \qquad
    \ff[d-a]{f}{s}(x) = \int P_x f \d\mu.
\end{equation}
Note that if  \( \rho_{i} = 1 \) for all \( i \),  then \( \ff[d-i]{f}{s} = \ff[i]{f}{s} \) and \( \ff[d-a]{f}{s} = \ff[a]{f}{s} \).

These approximations induce  \( \qq[d-i]{\Psi}{s} \) and \( \qq[d-a]{\Psi}{s} \) by
\begin{equation}
    \int f \d\qq[d-i]{\Psi}{s}
    := \int \ff[d-i]{f}{s} \d\Psi
    \qquad\text{and}\qquad
    \int f \:  \d\qq[d-a]{\Psi}{s}
    := \int \ff[d-a]{f}{s} \d\Psi.
\end{equation}
Algorithmically, this is equivalent to replacing \( m_i \) by \( \rho_{i}^{} m_i \) in the expressions for interpolatory and quadrature by approximation described in \cref{sup:sec:iq,sup:sec:aq}.

Observe that
\begin{equation}
    \int P_x \d\mu
    = \int P_x \d\qq[g]{\mu}{2s+1}
    = \rho_{0}.
\end{equation}
Thus, \( \qq[d-i]{\Psi}{s} \) and \( \qq[d-a]{\Psi}{s} \) will have unit mass provided \( \rho_0 = 1 \).
Next, suppose \( P_x(y) \geq 0 \) for all \( x,y \) and that \( f \geq 0 \).
Then clearly \( \int f \d\qq[d-i]{\Psi}{s} \geq 0\) and \( \int f \d\qq[d-a]{\Psi}{s} \) so the approximations are also weakly increasing. 

Perhaps the most widely used damping kernel is induced by the Jackson coefficients
\begin{equation}
\label{sup:eqn:jackson_coeffs}
    \rho_{i}^{J}
    = \frac{(s-i+2)\cos \left( \frac{i \pi}{s+2} \right) + \sin \left( \frac{i \pi}{s+2} \right)\cot \left( \frac{\pi}{s+2} \right)}{s+2}
    ,\qquad
    i=0,1,\ldots, s.
\end{equation}
When used with \( \mu = \mu_{a,b}^T \) this kernel results in the standard version of the kernel polynomial method \cite{weisse_wellein_alvermann_fehske_06,braverman_krishnan_musco_22}.
Other damping schemes are discussed in \cite{weisse_wellein_alvermann_fehske_06,lin_saad_yang_16}.

\section{Qualitative comparison of algorithms}
\label{sup:sec:tradeoffs}supplement

\subsection{Computational costs}

In \cref{sup:sec:krylov_moments} we described \cref{alg:moments} (\ref{alg:name:moments}), \cref{alg:cheb_moments} (\ref{alg:name:cheb_moments}), \cref{alg:lanczos} (\ref{alg:name:lanczos}), \cref{alg:moments_cheb} (\ref{alg:name:moments_cheb}), and \cref{alg:moments_lanc} (\ref{alg:name:moments_lanc}) which are used to compute the information required for the quadrature approximations described in \cref{sup:sec:quadrature}. 
Since \cref{alg:moments_cheb,alg:moments_lanc} respectively call \cref{alg:cheb_moments,alg:lanczos}, \cref{alg:moments,alg:cheb_moments,alg:lanczos} constitute the bulk of the computational cost of all implementations of the protoalgorithm discussed in this paper.

In each iteration, \cref{alg:moments,alg:cheb_moments,alg:lanczos} each require one matrix vector product with \( \vec{A} \) along with several scalar multiplications, vector additions, and inner products.
As such, the total computational cost of each algorithms is \( O(k(T_{\textup{mv}} + n)) \) where \( k \) is the number of iterations and $T_{\textup{mv}}$ is the cost of matrix-vector product with $\vec{A}$.
Here we ignore terms depending only on \( k \) (e.g. \( k^2 \)) which are low-order if we assume \( k\ll n \).
Each of the algorithms can also be implemented using just \( O(n) \) storage; i.e. without storing the entire basis for the Krylov subspace which would cost \( O(kn) \) storage.

While the algorithms are typically quite storage efficient, there are some situations in which it may be desirable to store the whole Krylov basis.
First, \cref{alg:lanczos} is sometimes run with full reorthogonalization. 
This can improve numerical stability, but increases the computation cost to \( O(k(T_{\textup{mv}} + kn)) \).
Next, by delaying all inner products to a final iteration (or using a non-blocking implementation), the number of global reductions required by \cref{alg:moments} and \cref{alg:cheb_moments} can be reduced.
Since global communication can significantly slow down Krylov subspace methods, this may speed up computation on highly parallel machines \cite{dongarra_heroux_luszczek_15}.
As mentioned earlier, there are implementations of the Lanczos algorithm which aim to decrease the number of global communications \cite{carson_demmel_15,carson_20}.
Designing Krylov subspace methods for avoiding or reducing communication costs is a large area study, but further discussion is outside the scope of this paper. 

Finally, because all of the \( n_{\text{v}} \) quadrature approximations are carried out independently, the protoalgorithm is highly parallelizable.
The algorithms can also be implemented to run on groups of starting vectors simultaneously using blocked matrix-vector products, at the cost of increasing the required memory by a factor of the block-size.
In many practical situations, particularly those where \( \vec{A} \) is extremely large, the dominant cost of a matrix-vector product is due to the data access pattern to \( \vec{A} \) which is not changed in a blocked matrix-vector product.
Thus, a block matrix-vector product can often be performed in essentially the same time as a single matrix-vector product.

\subsection{Stability in finite precision arithmetic}

When implemented exactly, each of \cref{alg:moments,alg:cheb_moments,alg:lanczos} compute \( \vec{q}_{i+1} \) by a symmetric three term recurrence of the form
\begin{equation}
    \vec{q}_{i+1} = \frac{1}{\beta_{i}} \left( \vec{A} \vec{q}_i - \alpha_i \vec{q}_i - \beta_{i-1} \vec{q}_{i-1}  \right).
\end{equation}
In \cref{alg:moments,alg:cheb_moments} the coefficients are predetermined whereas in \cref{alg:lanczos} the coefficients are chosen to enforce orthogonality.

In finite precision arithmetic, we instead have a perturbed recurrence \begin{equation}
    \vec{q}_{i+1} = \frac{1}{\beta_{i}} \left( \vec{A} \vec{q}_i - \alpha_i \vec{q}_i - \beta_{i-1} \vec{q}_{i-1}  - \vec{f}_{i+1} \right),
\end{equation}
where \( \vec{f}_{i+1} \) accounts for local rounding errors made in the computation of \( \vec{q}_{i+1} \).
The involved computations can typically be done stably, so it is reasonable to assume that \( \vec{f}_{i+1}/\beta_i \) has a linear dependence on the machine precision.

While \( \vec{q}_{j} = p_{j}(\vec{A}) \vec{v} \) in exact arithmetic, this is no longer the case in finite precision arithmetic. 
Indeed, the difference between \( \vec{q}_{i+1} \) and \( p_{i+1}(\vec{A})\vec{v} \) depends on the \emph{associated polynomials} of the recurrence applied to the \( \vec{f}_{j} \)'s. 
In particular, 
\begin{equation}
\label{sup:eqn:perturbed_recurrence}
    \vec{q}_{j+1} =
    p_{j+1}(\vec{A}) \vec{v}
    - \sum_{i = 1}^{j+1} \frac{1}{\beta_{i-1}} p_{i,j}(\vec{A})\vec{f}_i,
\end{equation}
where the associated polynomial $p_{i,j+1}(x)$ is defined through the three term recurrence 
\begin{equation}
p_{i,j+1}(x) = \frac{1}{\beta_j} \left( x p_{i,j}(x)  - \alpha_j p_{i,j}(x) -  \beta_{j-1} p_{i,j-1}(x) \right)    
\end{equation} 
with starting condition $p_{i,i}(x) = 1$ and $p_{i,i-1}(x) = 0$; see for instance \cref{sup:sec:associated} or \cite{meurant_06}.

In the case of the Chebyshev polynomials of the first kind, the associated polynomials are the Chebyshev polynomials of the second kind which remain small over $[-1,1]$.
Thus, it can be shown that \( \vec{q}_{i+1} \approx p_{i+1}(\vec{A}) \vec{v} \), where ``\(\approx\)'' roughly means the error has a (mild) polynomial dependence on \( k \) and a linear dependence on the machine precision, along with other reasonable dependencies on the dimension and matrix norm.
As such, the computed modified moments for \( \mu = \mu_{a,b}^T \) can be expected to be near to the true modified moments and bounds based on best polynomial approximation on $[a,b]$ can be expected to hold to close degree as long as \( [\lmin,\lmax]\subset [a,b] \) \cite{clenshaw_55}.

On the other hand, for different \( \{ \alpha_i \}_{i=0}^{\infty} \) and \( \{ \beta_i \}_{i=0}^{\infty} \), for instance those generated by the Lanczos algorithm, the associated polynomials may grow exponentially in \( [\lmin,\lmax]  \) and the modified moments obtained from the finite precision computation may differ greatly from their exact arithmetic counterparts unless very high precision is used.
In fact, this situation includes \( \mu_{a,b}^T \) if \( a \) and \( b \) are not chosen so that \( [\lmin,\lmax] \subset [a,b] \). 
Indeed, the Chebyshev polynomials of the second kind, shifted and scaled to $[a,b]$, grow exponentially with $k$ at points outside of $[a,b]$. 
As such, the error term in \cref{sup:eqn:perturbed_recurrence} can be exponentially large in $k$, resulting in a loss of stability unless the machine precision is set extremely small.

\subsubsection{The Lanczos algorithm}

In the case of \cref{alg:lanczos}, the coefficients are computed adaptively and therefore depend on \( \vec{q}_{i-1} \), \( \vec{q}_i \), and \( \vec{q}_{i+1} \).
It is well known that even if the \( \vec{f}_j \)'s are small, the coefficients produced by \cref{alg:lanczos} run in finite precision arithmetic may differ \emph{greatly} from what would be obtained in exact arithmetic and the Lanczos vectors \( \{ \vec{q}_j \}_{j=1}^{k+1} \) need not be orthogonal. 
Moreover, the tridiagonal matrix \( [\vec{T}]_{:k,:k} \) from the finite precision computation may have multiple nearby eigenvalues even when the eigenvalues would have been well separated in exact arithmetic.
In this sense, the algorithm is unstable, and such instabilities can appear even after only a few iterations.
This has resulted in widespread hesitance to use Lanczos based approaches for spectrum and spectral sum approximation unless the storage and computation heavy full reorthogonalization approach is used
\cite{jaklic_prelovsek_94,silver_roeder_voter_kress_96,aichhorn_daghofer_evertz_vondelinden_03,weisse_wellein_alvermann_fehske_06,ubaru_chen_saad_17,granziol_wan_garipov_19}.

A great deal is known about the Lanczos algorithm in finite precision arithmetic; see for instance \cite{greenbaum_97,meurant_06}.
Notably, \cite{paige_70,paige_76,paige_80} show that orthogonality of the Lanczos vectors is lost precisely when one of the eigenvalues the tridiagonal matrix converges to an eigenvalue of \( \vec{A} \). 
This has lead to the development of schemes which aim to orthogonalize only when necessary \cite{parlett_scott_79}.
Subsequently, \cite{greenbaum_89} shows that the matrix \( [\vec{T}]_{:k,:k} \) obtained by an implementation of the Lanczos algorithm run in finite precision can be viewed as the output the Lanczos algorithm run in exact arithmetic on a certain ``nearby'' problem. 
The bounds of \cite{greenbaum_89} are described at a high level in the context of spectrum approximation in  \cite[Appendix D]{chen_trogdon_ubaru_21}, and they provide insight as to why Lanczos based quadrature may still work in finite precision arithmetic.
However, that the bounds of \cite{greenbaum_89} have dependencies on the machine precision which are worse than linear and seemingly pessimistic in practice. 

While the tridiagonal matrix produced by Lanczos in finite precision arithmetic can be very different from what would be obtained in exact arithmetic, the quadrature approximation derived from the eigenvalues and squares of the first components of the eigenvectors of \( [\vec{T}]_{:k,:k} \) still computes modified moments accurately.
Specifically, \cite[Theorem 1]{knizhnerman_96} shows that as long as \( k \) is not too large, for any \( j \leq 2k-1 \), 
\begin{equation}
    \vec{v}^\cT \tilde T_j(\vec{A}) \vec{v} \approx (\vec{e}_0)^\cT \tilde T_j([\vec{T}]_{:k,:k}) \vec{e}_0,
\end{equation}
where \( \tilde{T}_j \) is the Chebyshev polynomial \( T_j \) scaled to an interval slightly larger than \( [\lmin,\lmax] \). 
Thus,  best polynomial approximation on $[a,b]$ hold to close approximation if the best approximation among all polynomials is replaced with the best approximation among polynomials with Chebyshev coefficients which are not too large.
Moreover, this essentially implies that computing the modified moments for the Chebyshev distribution can be done stably using the information generated by Lanczos.

A full analysis of the finite precision behavior of the algorithms studied in this paper is beyond the scope of this paper.
Instead, we leave readers with the following heuristics which are supported by the theory mentioned above and by the numerical examples below:
In finite precision arithmetic, Lanczos based Gaussian quadrature will perform nearly as well as, or quite often significantly better than, properly scaled Chebyshev based approaches.
However, while the Chebyshev based approaches require knowledge of \( a \) and \( b \) with \( [\lmin,\lmax] \subset [a,b] \) to avoid exponential instabilities, Lanczos based approaches do not require a priori knowledge of the spectrum of \( \vec{A} \).
Further discussion can be found in \cite{chen_24}.

\section{Additional Proofs}

\subsection{Proof of \cref{sup:eqn:perturbed_recurrence}}
\label{sup:sec:associated}

For any $i\geq 0$, define the associated polynomials by the recurrence
\begin{equation}
    \label{eqn:assoc_three_term}
    p_{i,j+1}(x) = \frac{1}{\beta_j} \left( x p_{i,j}(x)  - \alpha_j p_{i,j}(x) -  \beta_{j-1} p_{i,j-1}(x) \right)
    ,\qquad j =i,i+1,\ldots
\end{equation}
with starting condition $p_{i,i}(x) = 1$ and $p_{i,i-1}(x) = 0$.

\begin{lemma}
        Suppose 
    \begin{equation}
        \vec{q}_{j+1} = \frac{1}{\beta_{j}} \left( \vec{A} \vec{q}_j - \alpha_i \vec{q}_j - \beta_{j-1} \vec{q}_{j-1}  - \vec{f}_{j+1} \right) ,
    \end{equation}
    where $\vec{q}_0 = \vec{v}$ and $\beta_0 = \beta_{-1} = 0$.
    Then,
    \begin{align*}
        \vec{q}_{j+1} =
        p_{j+1}(\vec{A}) \vec{v}
        - \sum_{i = 1}^{j+1} \frac{1}{\beta_{i-1}} p_{i,j}(\vec{A})\vec{f}_i
    \end{align*}
\end{lemma}

\iftrue
\begin{proof}
    Define \( \vec{\Delta}_j := \vec{q}_j - p_j(\vec{A})\vec{v} \).
    By definition we have \( \vec{\Delta}_0 = \vec{0} \) and it is easy to verify that
     \begin{equation}
        \vec{q}_1 
        = \frac{1}{\beta_0} \left(\vec{A} \vec{q}_0 - \alpha_0 \vec{q}_0 - \vec{f}_1\right)
        = p_{1}(\vec{A}) \vec{v} -  \frac{1}{\beta_0}p_{1,1}(\vec{A}) \vec{f}_1.
    \end{equation}
    Thus, \( \vec{\Delta}_1 = -\vec{f}_1 / \beta_0 \). 
    
    For \( j > 0 \),
    \begin{align*}
        \vec{q}_{j+1} 
        &= \frac{1}{\beta_{j}} \left( \vec{A} \vec{q}_{j} - \alpha_{j} \vec{q}_{j} - \beta_{j-1} \vec{q}_{j-1} - \vec{f}_{j+1} \right) 
        \\&= \frac{1}{\beta_{j}} \left( \vec{A} p_{j}(\vec{A})\vec{v} - \alpha_{j} p_{j}(\vec{A})\vec{v} - \beta_{j-1} p_{j-1}(\vec{A})\vec{v} \right) 
        \\&\hspace{5em} + \frac{1}{\beta_{j}} \left( \vec{A} \vec{\Delta}_{j} - \alpha_{j} \vec{\Delta}_{j} - \beta_{j-1} \vec{\Delta}_{j-1} \right) - \frac{1}{\beta_{j}} \vec{f}_{j+1}
        \\&= p_{j+1}(\vec{A})\vec{v} + \frac{1}{\beta_{j}} \left( \vec{A} \vec{\Delta}_{j} - \alpha_{j} \vec{\Delta}_{j} - \beta_{j-1} \vec{\Delta}_{j-1} \right) - \frac{1}{\beta_{j}} \vec{f}_{j+1}.
    \end{align*}
    Thus, we see that
    \begin{equation}
        \vec{\Delta}_{j+1} =  \frac{1}{\beta_{j}} \left( \vec{A} \vec{\Delta}_{j} - \alpha_{j} \vec{\Delta}_{j} - \beta_{j-1} \vec{\Delta}_{j-1} \right) - \frac{1}{\beta_{j}}\vec{f}_{j+1}.
    \end{equation}

    By induction, assume that
    \begin{equation}
        \vec{\Delta}_{j} = - \sum_{\ell=1}^{j} \frac{1}{\beta_{\ell-1}} p_{\ell,j}(\vec{A}) \vec{f}_\ell
        ,\qquad
        \vec{\Delta}_{j-1} = - \sum_{\ell=1}^{j-1} \frac{1}{\beta_{\ell-1}} p_{\ell,j-1}(\vec{A}) \vec{f}_\ell.
    \end{equation}
    Therefore, using \cref{eqn:assoc_three_term},
    \begin{align*}
        \vec{\Delta}_{j+1} 
        &= -  \sum_{\ell=1}^{j-1} \frac{1}{\beta_j}\left( \frac{1}{\beta_{\ell-1}} \vec{A} p_{\ell,j}(\vec{A}) \vec{f}_\ell  - \frac{1}{\beta_{\ell-1}} \alpha_j p_{\ell,j}(\vec{A}) \vec{f}_\ell - \frac{\beta_{j-1}}{\beta_{\ell-1}} p_{\ell,j-1}(\vec{A})\vec{f}_\ell \right)
        \\&\hspace{5em}-\frac{1}{\beta_j} \left( \frac{1}{\beta_{j-1}} \vec{A} p_{j,j}(\vec{A}) \vec{f}_j  - \frac{1}{\beta_{j-1}} \alpha_{j} p_{j,j}(\vec{A}) \vec{f}_j  \right)
        - \frac{1}{\beta_j} \vec{f}_{j+1}
        \\&= -  \sum_{\ell=1}^{j-1} \frac{1}{\beta_{\ell-1}} p_{\ell,j+1}(\vec{A}) \vec{f}_\ell
        - \frac{1}{\beta_{j-1}}p_{j,j+1}(\vec{A})
        - \frac{1}{\beta_j} p_{j+1,j+1}(\vec{A}) \vec{f}_{j+1}
        \\&= -  \sum_{\ell=1}^{j+1} \frac{1}{\beta_{\ell-1}} p_{\ell,j+1}(\vec{A}) \vec{f}_\ell.
    \end{align*}
\end{proof}

\subsection{Proof of \cref{thm:smoothing_wass}}
\label{sec:smoothing_wass}

\begin{proof}[Proof of \cref{thm:smoothing_wass}]
The first statement follows by noting that
\allowdisplaybreaks
    \begin{align*}
        \W(\Upsilon,\Upsilon_\sigma)
        &= \int_{-\infty}^{\infty} \left| \int_{-\infty}^{\infty} \bOne[x-t<0] \d\Upsilon(t) - \int_{-\infty}^{\infty} G_\sigma(x-t) \d\Upsilon(t) \right| \d{x}
        \\&= \int_{-\infty}^{\infty} \left| \int_{-\infty}^{\infty} ( \bOne[x-t<0] G_\sigma(x-t) ) \d\Upsilon(t) \right| \d{x}
        \\&\leq \int_{-\infty}^{\infty} \left( \int_{-\infty}^{\infty} \left|  \bOne[x-t<0]-  G_\sigma(x-t) \right|  \left| \d\Upsilon(t) \right| \right) \d{x}
        \\&= \int_{-\infty}^{\infty} \left( \int_{-\infty}^{\infty} \left|  \bOne[x-t<0]-  G_\sigma(x-t) \right| \d{x} \right) \left| \d\Upsilon(t) \right|
        \\&= \int_{-\infty}^{\infty} \W(\bOne[\cdot < 0], G_\sigma)  \left| \d\Upsilon(t) \right|
        \\&= \W(\bOne[\:\cdot < 0], G_\sigma) .
    \end{align*}

    Next, we split the integral and exchange the bounds of integration to find
    \begin{align*}
        \int_{-\infty}^{\infty} \left|  \bOne[x<0]-  G_\sigma(x) \right| \d{x}
        &= \int_{0}^{\infty} ( 1-G_\sigma(x) ) \d{x} + \int_{-\infty}^{0} G_\sigma(x) \d{x}
        \\&= \int_{0}^{\infty} \int_{x}^\infty \d G_\sigma(t) \d{t} + \int_{-\infty}^{0} \int_{-\infty}^{x} \d G_\sigma(t) \d{x}
        \\&= \int_{0}^{\infty} \int_{0}^t \d{x} \d G_\sigma(t) + \int_{-\infty}^{0} \int_{t}^{0} \d{x} \d G_\sigma(t)
        \\&= \int_{0}^{\infty} t \d G_\sigma(t) + \int_{-\infty}^{0} -t \d G_\sigma(t)
        \\&= \int_{-\infty}^{\infty} |t| \d G_\sigma(t).
    \end{align*}
    Finally, by the Cauchy--Schwarz inequality,
    \begin{equation*}
        \int_{-\infty}^{\infty} |t| \d  G_\sigma(t)
        \leq \left( \int_{-\infty}^{\infty} |t|^2 \d  G_\sigma(t) \right)^{1/2} = \sigma.
    \end{equation*}
\end{proof}

\section{Numerical experiments and applications}
\label{sup:sec:numerical_experiments}

In this section, we provide additional numerical experiments showing the finite precision behavior of the algorithm studied.

\subsection{Finite precision convergence of SLQ}
\begin{figure}
    \includegraphics[width=\textwidth]{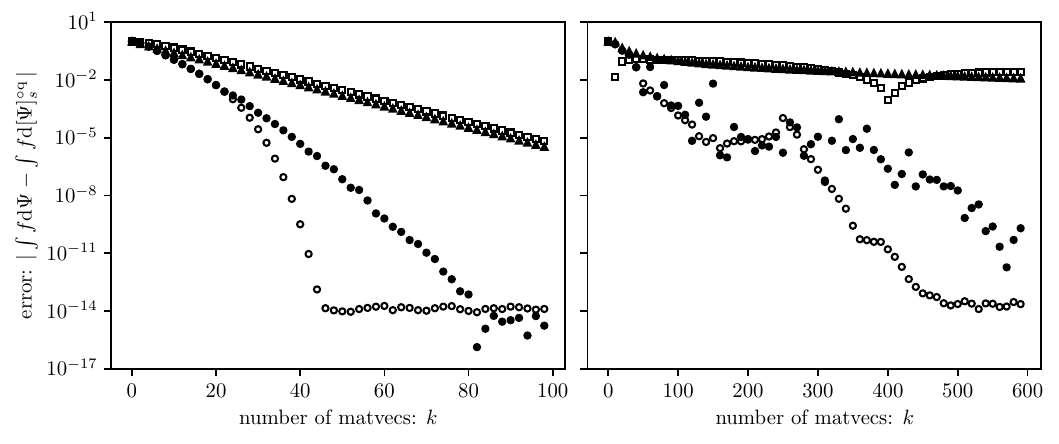}
    \caption{
    Errors for approximating \( \tr(f(\vec{A})) \) on various problems in finite precision arithmetic.
    \emph{Legend}:
    Gaussian quadrature with reorthogonalization
    ({\protect\raisebox{0mm}{\protect\includegraphics[scale=.7]{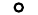}}}) and without reorthogonalization
    ({\protect\raisebox{0mm}{\protect\includegraphics[scale=.7]{imgs/legend/circle.pdf}}}), 
    approximate quadrature by approximation 
    ({\protect\raisebox{0mm}{\protect\includegraphics[scale=.7]{imgs/legend/tri.pdf}}}),
    and quadrature by interpolation 
    ({\protect\raisebox{0mm}{\protect\includegraphics[scale=.7]{imgs/legend/square_empty.pdf}}}).
    \emph{Takeaway}: Without reorthogonalization the convergence of Gaussian quadrature is slowed. 
    However, the method still converges and can even outperform the other methods.
    }
    \label{fig:FP_effects}
\end{figure}

In this example, we consider several experiments where orthogonality is lost and the effects of finite precision arithmetic are easily observed.
In both experiments we use diagonal matrices scaled so \( \| \vec{A} \|_2 = 1 \) and set \( \vec{v} \) to have uniform entries.
We set \( a,b \) as the largest and smallest eigenvalues respectively and again use \( \mu = \mu_{a,b}^T \) for the interpolatory and quadrature by approximations.

In the first experiment, shown in the left panel of \cref{fig:FP_effects}, the eigenvalues of \( \vec{A} \) are distributed according to the model problem \cite{strakos_91,strakos_greenbaum_92} 
and \( f(x) = 1/x \).
Specifically, the eigenvalues are given by
\begin{equation}
    \lambda_1 = 1
    ,\qquad \lambda_n = \kappa
    ,\qquad \lambda_i = \lambda_1 + \left( \frac{i-1}{n-1} \right) \cdot (\kappa -1) \cdot \rho^{n-i}
    ,\qquad i=2,\ldots,n-1
\end{equation}
with selected parameters \( n = 300 \), \( \kappa = 10^3 \), and \( \rho = 0.85 \).
The model problem is a standard problem in the analysis of the finite precision behavior of Lanczos based algorithms, especially in the context of solving linear systems of equations.
This is because the exponential spacing of the eigenvalues is favorable to Lanczos based linear system solvers in exact arithmetic yet simultaneously causes the Lanczos algorithm to rapidly lose orthogonality in finite precision arithmetic.
In the second experiment, shown in the right panel \cref{fig:FP_effects}, we use the \( n=9664 \) eigenvalues of the California matrix from the sparse matrix suite \cite{davis_hu_11} and the function \( f(x) = |x| \).

In both cases, the Jacobi matrices produced by Lanczos, with or without reorthogonalization, differ greatly; i.e. the difference of the matrices is on the order of \( \| \vec{A} \|_2 \). 
Even so, the modified moments for \( \mu = \mu_{a,b}^T \) obtained by \cref{alg:moments_cheb,alg:moments_lanc} differ only in the 12th digit and 14th digits on the two matrices respectively.
Using one approach in place of the other does not noticeably impact the convergence of the quadrature by interpolation approximations.

\printbibliography